\def\N{\mathbb N}
\def\A{\mathcal A}
\def\C{\mathcal C}
\def\u{\mathbf u}
\def\vv{\mathbf v} 
\renewcommand{\L}[1][]{ \mathcal{L}_{#1} (\mathbf{u}) }
\def \Rk#1 {$\mathcal{R}_{#1}$}
\def \FC#1 {
\mathcal{C}
\ifthenelse{\equal{#1}{}}{}{(#1)}
}
\def \PC#1 {
\mathcal{P}_{\Theta}
\ifthenelse{\equal{#1}{}}{}{(#1)}
}
\def \PCn#1 {
\mathcal{P}
\ifthenelse{\equal{#1}{}}{}{(#1)}
}
\newtheorem{thm}{Theorem}
\newtheorem{coro}[thm]{Corollary}
\newtheorem{lem}[thm]{Lemma}
\newtheorem{prop}[thm]{Proposition}
\newtheorem{defi}[thm]{Definition}
\crefname{thm}{theorem}{theorems}
\crefname{thrm}{theorem}{theorems}
\crefname{coro}{corollary}{corollaries}
\crefname{example}{example}{examples}
\crefname{lem}{lemma}{lemmas}
\crefname{lmm}{lemma}{lemmas}
\crefname{claim}{claim}{claims}
\crefname{obs}{obsertvation}{observations}
\crefname{proposition}{proposition}{propositions}
\crefname{prop}{proposition}{propositions}
\crefname{defi}{definition}{definitions}
\theoremstyle{remark}
\newtheorem{priklad}[thm]{Example}
\crefname{priklad}{example}{examples}
\def\pf{\begin{proof}}
\def\pfk{\end{proof}}
\begin{document}
\title{On Theta-palindromic Richness}
\author{Štěpán Starosta}
\address{Department of Mathematics, FNSPE, Czech Technical University in Prague, Trojanova 13, 120~00 Praha~2, Czech Republic}

\date{\today}
\email{staroste@fjfi.cvut.cz}

\begin{abstract}
In this paper we study generalization of the reversal mapping realized by an arbitrary involutory antimorphism $\Theta$.
It generalizes the notion of a palindrome into a $\Theta$-palindrome -- a word invariant under $\Theta$.
For languages closed under $\Theta$ we give the relation between $\Theta$-palindromic complexity and factor complexity.
We generalize the notion of richness to $\Theta$-richness
and we prove analogous characterizations of words that are $\Theta$-rich, especially in the case of set of factors invariant under $\Theta$.
A criterion for $\Theta$-richness of $\Theta$-episturmian words is given together with other examples of $\Theta$-rich words.
\end{abstract}

\maketitle

\section{Introduction}

A palindrome is a word that reads the same from the left as from the right.
In the last decade the study of palindromes  in the field of combinatorics on words has notably grown.
The motivation comes for instance from the application in mathematical physics or diophantine approximation.
Two notions were mainly studied.
First the {\bf palindromic complexity} which counts the number of palindromes of given length in an infinite word.
Its study resulted for example in a new equivalent definition of Sturmian words using palindromic complexity (\cite{DrPi}).
The second notion is the notion of {\bf defect} which measures the saturation by palindromic factors of a finite word (\cite{BrHaNiRe}) .
An important class of words that are ``rich'' in palindromes in the utmost sense (called \textbf{rich} words) have been defined in \cite{DrJuPi,GlJuWiZa}.
They have been further studied and many characterizations have been found.
For recent results and summary on rich words see for instance \cite{BuLuGlZa,BuLuGlZa2,GlJuWiZa}.

A more general approach to the notion of palindrome has been considered by replacing the reversal mapping by a mapping $\Theta$ having the needed properties, i.e., being an involution and antimorphism.
A word which is invariant under such mapping is called a {\bf pseudopalindrome} or {\bf $\Theta$-palindrome}.
See for instance \cite{AnZaZo} or \cite{BuLuLuZa2}.
In this work we concentrate on $\Theta$-palidromic saturation in infinite words.
We derive an upper bound for the number of distinct $\Theta$-palindromic factors in a finite word.
We define an analogue to rich words -- words containing the maximum number of $\Theta$-palindromes possible.
For this class of words that are rich in $\Theta$-palindromes (or shortly {\bf $\Theta$-rich}) we prove similar equivalent statements as are known for rich words.
The main result concerns words with set of factors closed under $\Theta$.
We prove the following relation between $\Theta$-palindromic complexity $\PC{n}$ and factor complexity $\FC{n}$ which is analogous to the case of the reversal mapping (\cite{BaMaPe}).
$\PC{n}$ denotes the function counting the number of distinct $\Theta$-palindromic factors of length $n$ in a given word
and $\FC{n}$ denotes the function counting the number of distinct factors of length $n$ in a given word.

\begin{thm}
\label{nerovnost}
Let $\u$ be an infinite word with language closed under $\Theta$.
Then
$$
\PC{n} + \PC{n+1} \leq \FC{n+1} - \FC{n} + 2, \, \, \text{for all $n \in \N$.}
$$
\end{thm}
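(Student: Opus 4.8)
The plan is to adapt the classical argument of Balková–Masáková–Pelantová for ordinary palindromes to the $\Theta$-palindromic setting. The key object is the bilateral multiplicity of factors: for each factor $w$ of $\u$ of length $n$, consider how it extends to factors of length $n+1$ on the left and on the right. Recall that $\FC{n+1}-\FC{n}=\sum_{w}\bigl(m(w)-1\bigr)$, where the sum runs over all factors $w$ of length $n$ and $m(w)$ is the number of right extensions $wa$ that are factors (equivalently one counts the first-difference of the complexity via the standard Rauzy-graph identity). First I would set up this bookkeeping carefully, relating the right-special and left-special factors to the increments of $\FC{}$.

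The heart of the proof is to inject the $\Theta$-palindromes of lengths $n$ and $n+1$ into this extension count. Because the language is closed under $\Theta$, the map $\Theta$ is a bijection on factors that sends a factor of length $k$ to a factor of length $k$, and a $\Theta$-palindrome is a fixed point of this map. The crucial structural fact is that if $p$ is a $\Theta$-palindrome of length $n$, then its $\Theta$-palindromic extensions of length $n+1$ or $n+2$ are highly constrained: since $\Theta$ is an involutory antimorphism, extending $p$ on the right by a letter $a$ to a $\Theta$-palindrome forces a corresponding extension on the left by $\Theta(a)$, so each $\Theta$-palindrome has a canonical ``longest $\Theta$-palindromic extension'' behavior analogous to the reversal case. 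I would therefore count, for each factor $w$ of length $n$, whether $w$ itself or one of its extensions is a $\Theta$-palindrome, and show that the total palindromic contribution $\PC{n}+\PC{n+1}$ is dominated by the total number of proper bilateral extensions plus a bounded correction.

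Concretely, I expect to partition the factors of length $n$ according to whether they are $\Theta$-palindromic and whether they are special (left-special, right-special, or bispecial), and to argue that every $\Theta$-palindrome of length $n$ or $n+1$ that is \emph{not} a rightmost/leftmost occurrence forces an extra branching in the Rauzy graph, contributing to $\FC{n+1}-\FC{n}$. The additive constant $2$ should arise from the two ``boundary'' $\Theta$-palindromes that need not create branching: these play the role of the extremal palindromes (the empty word and the single-letter cases) in the classical bound, and I would isolate them as the terms that escape the injection.

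The hard part will be handling the interaction between $\Theta$ and the special-factor structure correctly, in particular ensuring that a $\Theta$-palindrome of length $n$ and one of length $n+1$ are not double-counted against the same extension and that the parity mismatch between $\Theta(a)$-left-extensions and $a$-right-extensions is accounted for. Unlike the reversal case, $\Theta$ may act nontrivially on letters, so the symmetry $w\mapsto\Theta(w)$ pairs a factor with a generally different factor; I must verify that this pairing is compatible with the extension-counting so that the injection into $\FC{n+1}-\FC{n}+2$ does not lose or overcount contributions. I anticipate that the cleanest route is to first prove an exact identity for $\PCn{n}$-type counts in terms of the $\Theta$-action on the Rauzy graph and then specialize, with the inequality emerging once the two exceptional boundary terms are discarded.
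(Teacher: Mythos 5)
Your general framework---letting $\Theta$ act on the Rauzy graph $\Gamma_n$ and counting $\Theta$-palindromes against the branching that produces $\Delta \FC{n} = \FC{n+1}-\FC{n}$---is exactly the paper's, but your proposal stops short of the two ideas that make the count actually close, and the one mechanism you do commit to is not the right one. The paper does not inject palindromes into individual factor extensions; it works with $n$-simple paths. The key lemma, which you never formulate, is that an $n$-simple path contains at most one $\Theta$-palindrome of length $n$ or $n+1$ other than its prefix and suffix; it is proved by taking the shortest left special factor having such a palindrome as suffix and exhibiting a forbidden interior occurrence of a special factor. Combined with the observation that a $\theta$-fixed edge, or a $\theta$-fixed non-special vertex, forces the whole $n$-simple path through it to be mapped onto itself, this yields $\PC{n}+\PC{n+1} \leq (\text{number of $\theta$-fixed $n$-simple paths}) + \alpha$, where $\alpha$ is the number of special $\Theta$-palindromes of length $n$. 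Without this lemma your ``injection'' has no well-defined target, and the double counting you yourself flag as ``the hard part'' is never ruled out.

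Moreover, your explanation of the additive constant $2$---two ``boundary'' palindromes playing the role of the empty word and the single-letter palindromes---is not where the $2$ comes from, and an argument built on that picture would not go through. In the paper the constant is graph-theoretic: the super reduced Rauzy graph $\Gamma_n''$, whose vertices are the pairs $\{w,\Theta(w)\}$ of special factors of length $n$ (there are $\alpha+\beta$ of them, where $2\beta$ counts the non-$\Theta$-palindromic special factors), is connected because $\u$ is recurrent (recurrence itself has to be derived from closure under $\Theta$, which you also do not address); hence $\Gamma_n''$ has at least $\alpha+\beta-1$ edges, and each such edge accounts for at least two $n$-simple paths \emph{not} fixed by $\theta$. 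Subtracting $2(\alpha+\beta-1)$ from the total number of $n$-simple paths $\sum_{w \text{ special}} {\rm deg}_{+}(w)$ and adding back $\alpha$ gives $\sum_{w \text{ special}} \left( {\rm deg}_{+}(w)-1 \right) + 2 = \Delta\FC{n}+2$; the $+2$ is literally the contribution of the $-1$ in the spanning-connectivity bound. Finally, you omit the degenerate case: when $\u$ is eventually periodic there are lengths $n$ with no special factor at all, the reduced and super reduced Rauzy graphs are empty, and the whole counting scheme is vacuous; the paper needs a separate argument there (its lemma analyzing the cycle $\Gamma_n$, which gives $\PC{n}+\PC{n+1}=2$ exactly), and your proposal has nothing to replace it.
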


We show that the equality is attained for $\Theta$-rich words,
 a result analogous to the case of the reversal mapping in \cite{BuLuGlZa}.

\begin{thm} \label{pseudo_rovnost_rich}
Let $\u$ be an infinite word with language closed under $\Theta$.
Then $\u$ is $\Theta$-rich if and only if
$\PC{n} + \PC{n+1} = \FC{n+1} - \FC{n} + 2$ for all $n \geq 1$.
\end{thm}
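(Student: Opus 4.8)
The plan is to reinterpret both sides of the inequality in \cref{nerovnost} through the Rauzy graph $G_n$ of $\u$, whose vertices are the factors of length $n$ and whose directed edges are the factors of length $n+1$, so that $G_n$ has $\FC{n}$ vertices and $\FC{n+1}$ edges. Since the language of $\u$ is closed under $\Theta$ and $\Theta$ is an involutory antimorphism, the map $w \mapsto \Theta(w)$ induces an involution on $G_n$ that reverses the orientation of every edge. Under this involution the fixed vertices are exactly the $\Theta$-palindromic factors of length $n$ and the edges sent to themselves are exactly the $\Theta$-palindromic factors of length $n+1$; hence $\PC{n} + \PC{n+1}$ counts precisely the fixed points of $\Theta$ acting on the geometric realization $|G_n|$ (the fixed vertices together with the midpoints of the self-reversed edges). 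I would assume $\u$ recurrent, so that each $G_n$ is connected, and treat the level $n = 0$ separately, where the empty word and the non-fixed letters make the count exceptional; this is why the equality is asserted only for $n \geq 1$.

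First I would make explicit the equality condition hidden in \cref{nerovnost}. Applying the Euler-characteristic formula for a $\mathbb{Z}/2$-action on a finite $1$-complex, namely $\chi(|G_n|) = 2\chi(|G_n|/\Theta) - \#\mathrm{Fix}(\Theta)$, together with $\chi(|G_n|) = \FC{n} - \FC{n+1}$ and $\#\mathrm{Fix}(\Theta) = \PC{n} + \PC{n+1}$, yields
$$
\PC{n} + \PC{n+1} = \FC{n+1} - \FC{n} + 2\chi(|G_n|/\Theta).
$$
Because $|G_n|/\Theta$ is a connected $1$-complex we have $\chi(|G_n|/\Theta) \leq 1$, which recovers \cref{nerovnost}, and the bound is attained exactly when $\chi(|G_n|/\Theta) = 1$, that is, when the quotient graph $|G_n|/\Theta$ is a tree. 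Thus the theorem reduces to the purely combinatorial statement that $\u$ is $\Theta$-rich if and only if $|G_n|/\Theta$ is a tree for every $n \geq 1$.

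To close this equivalence I would invoke the characterisation of $\Theta$-richness of the type established earlier in the paper, namely that $\u$ is $\Theta$-rich precisely when every complete $\Theta$-return word to a $\Theta$-palindromic factor is again a $\Theta$-palindrome (equivalently, when the longest $\Theta$-palindromic suffix of each factor is unioccurrent). The dictionary is that a cycle in $G_n$ encodes a complete return word, and such a return word is a $\Theta$-palindrome exactly when $\Theta$ reverses the corresponding cycle; a cycle reversed by $\Theta$ contributes nothing to the first homology of the quotient. I would therefore show, in both directions, that the richness condition forces $\Theta$ to act as $-\mathrm{id}$ on $H_1(|G_n|)$, so that $H_1(|G_n|/\Theta)$ vanishes and the quotient is a tree, and conversely that a non-trivial $\Theta$-invariant cycle produces a complete $\Theta$-return word that is not a $\Theta$-palindrome, breaking richness.

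The main obstacle is this last translation between the homology of the quotient and return words. The delicate point is that $H_1(|G_n|)$ is generated by cycles that need not be based at $\Theta$-palindromic vertices, so one must argue that richness still symmetrises an arbitrary cycle basis: the clean way is to fix a $\Theta$-invariant spanning forest of $G_n$ so that the generating cycles either pair up or are individually reversed, and then read off the complete $\Theta$-return words they encode. Checking that every generator is symmetrised under the richness hypothesis, and conversely extracting a genuinely non-$\Theta$-palindromic return word from a surviving cycle in the quotient, together with the bookkeeping of self-reversed edges and of vertices swapped in pairs, is where the real work lies; the topological identity above is what makes both the inequality and its equality case transparent.
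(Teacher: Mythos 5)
The first half of your argument is correct and genuinely different from the paper's route: the $\mathbb{Z}/2$ Euler-characteristic identity
\[
\PC{n} + \PC{n+1} \;=\; \FC{n+1} - \FC{n} + 2\,\chi\bigl(|\Gamma_n(\u)|/\Theta\bigr)
\]
does hold (after subdividing the self-reversed edges at their midpoints, which is exactly your fixed-point bookkeeping: fixed vertices are the $\Theta$-palindromes of length $n$, midpoints of self-reversed edges the $\Theta$-palindromes of length $n+1$), and since closure under $\Theta$ forces recurrence and hence connected Rauzy graphs, it simultaneously recovers \Cref{nerovnost} and compresses \Cref{pseudorovnost} into the single criterion ``equality at level $n$ iff the quotient graph is a tree''. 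Working with the full Rauzy graph $\Gamma_n(\u)$ rather than the reduced and super reduced ones even dispenses with the paper's separate treatment of the periodic case. Up to this point the proposal is sound.

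The gap is in the second half, which is precisely where the paper's proof does its work. You plan to close the equivalence ``$\u$ is $\Theta$-rich iff $|\Gamma_n(\u)|/\Theta$ is a tree for all $n \geq 1$'' by invoking the characterisation ``complete return words to $\Theta$-palindromic factors are $\Theta$-palindromes, equivalently the longest $\Theta$-palindromic suffix of every factor is unioccurrent''. That is the characterisation of \emph{classical} richness (\Cref{equiv_rich}); its literal $\Theta$-translation is false. Complete return words to the $\Theta$-palindrome $\varepsilon$ are single letters, which are non-$\Theta$-palindromic as soon as $\Theta$ differs from the reversal; and in the $\Theta$-rich word $\left(ccaa'\right)^{\omega}$ of Section \ref{sec:examples} the factor $ca$ has no non-empty $\Theta$-palindromic suffix at all, so unioccurrence can only be demanded outside the exceptional prefixes $px$ with $\gamma(p) \neq \gamma(px)$ (\Cref{ups}). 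The correct characterisation (\Cref{rich_nece}) instead concerns factors that begin with $v$ and end with $\Theta(v)$, \emph{together with} the alternation of occurrences of $a$ and $\Theta(a)$ --- and in the direction ``equality $\Rightarrow$ richness'' this alternation is not a hypothesis you may assume: the paper must first extract it from the equality at $n=1$ (\Cref{P_acka_alternuji}) before any dictionary between quotient cycles and return-type words can be set up, since a cycle in the quotient could a priori come from a factor running from $v$ back to $v$ with no intervening occurrence of $\Theta(v)$. Your spanning-forest sketch explicitly defers exactly this translation (``where the real work lies''), so what you have established is an elegant graph-theoretic reformulation of the equality condition, not the equivalence with $\Theta$-richness that the theorem asserts.
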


We also make a connection to $\Theta$-episturmian words that are a generalization of episturmian words -- a well known example of rich words.
For a survey on such class of words see for instance \cite{BuLuLuZa}.
Unlike episturmian words, $\Theta$-episturmian words are not all $\Theta$-rich.
We give a condition when this is true.

Section \ref{sec:note} summarizes notation and introduces some needed results from combinatorics on words.
It includes known results on rich words and defines $\Theta$-palindromes.
Section \ref{sec:pcfc} establishes a relation between $\Theta$-palindromic and factor complexity.
Next section introduces the definition of $\Theta$-richness and a proof of \Cref{pseudo_rovnost_rich}.
Section \ref{sec:rich_examples} provides examples of $\Theta$-rich words and contains also examples of $\Theta$-episturmian words which are not $\Theta$-rich.

\section{Notation and basic facts}
\label{sec:note}

By $\A$ we denote an \textit{alphabet} -- a finite set of symbols, called \textit{letters}.
A finite sequence of letters $w = w_0w_1w_2 \ldots w_{n-1}$ is called a \textit{word}.
Its length is denoted by $|w|$ and equals $n$.
The set of all words over an alphabet $\A$, together with the \textit{empty word} $\varepsilon$ and the operation of \textit{concatenation}, form a free monoid $\A^{*}$.

The \textit{reversal} of a finite word $w = w_0w_1 \ldots w_{n-1}$ is
the word $\overline{w} = w_{n-1} w_{n-2} \ldots w_0$.
The mapping $w \mapsto \overline{w}$ is called reversal
and clearly is a bijection on $\A^*$.
If $w = \overline{w}$ then such word $w$ is called a \textit{palindrome}.

An infinite sequence $\u = u_0u_1u_2 \ldots$ of letters from $\A$ is said to be an \textit{infinite word} over $\A$.
A finite word $v$ is called a \textit{factor} of a finite word $w$ if there exist words $v'$ and $v''$ such that
$w = v'vv''$. If $v' = \varepsilon$ then $v$ is called a \textit{prefix} of $w$. If $v'' = \varepsilon$ then $v$ is called a \textit{suffix}.
A factor of an infinite word $\u$ is defined analogously with $v''$ being infinite.

The \textit{language} $\L$ of an infinite word $\u$ is the set of all its factors.
$\L[n]$ denotes the set of factors of $\u$ of length $n$.
We have $\L[0] = \{ \varepsilon \}$ and $\L = \bigcup_{n \in \N} \L[n]$.
If for every factor $w \in \L$ the reversal image of $w$ is in $\L$, then we say that the language $\L$ is \textit{closed under reversal}.

For any factor $w \in \L$ there exists an index $i$ such that $w = u_iu_{i+1} \ldots u_{i+|w|-1}$.
Such an index is called an \textit{occurrence} of $w$ in $\u$.
We will say that a factor $v$ is \textit{unioccurent} in some word $w$ if $v$ occurs exactly once in $w$.
If every factor of $\u$ has at least $2$ occurrences in $\u$ then the word $\u$ is said to be \textit{recurrent}.
One can verify that this is equivalent with every factor occurring infinitely many times.
For a recurrent infinite word we define a \textit{complete return word}\footnote{A shorter term ``complete return'' is also used.} to a factor $w \in \L$
to be a factor with exactly $2$ distinct occurrences of $w$, one as a prefix and one as a suffix.

A factor $w \in \L$ is said to be \textit{left (right) special} if there exist two distinct letters $a,b \in \A$
such that $aw \in \L$ and $bw \in \L$ ($wa \in \L$ and $wb \in \L$).
A factor is \textit{bispecial} if it is right and left special.

The \textit{factor complexity} of an infinite word $\u$ is a mapping $\N \mapsto \N$ defined as $\FC{n} := \# \L[n]$.
The \textit{palindromic complexity} is a mapping $\N \mapsto \N$
 defined as $\PCn{n} := \# \left \{ w \in \L[n] \mid w = \overline{w} \right \}$.

In \cite{BaMaPe} the following connection between palindromic complexity
 and the first difference of factor complexity $\Delta \FC{n} = \FC{n+1} - \FC{n}$
 was proved.

\begin{thm}[\cite{BaMaPe}] \label{nerovnost-palindromy}
Let $\u$ be an infinite word with language closed under reversal.
Then for all $n$ we have
\begin{equation}\label{eq:norm_ineq}
\PCn{n} + \PCn{n+1} \leq \Delta \FC{n} + 2.
\end{equation}
\end{thm}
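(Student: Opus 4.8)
The plan is to encode the factors of lengths $n$ and $n+1$ in a graph, read the two complexities off as its vertex and edge counts, and then exploit the symmetry coming from reversal. Concretely, I would form the Rauzy graph $\Gamma_n$ whose vertices are the factors in $\L[n]$ and whose oriented edges are the factors in $\L[n+1]$, an edge $v\in\L[n+1]$ joining its length-$n$ prefix to its length-$n$ suffix. By definition the number of vertices is $V=\FC{n}$ and the number of edges is $E=\FC{n+1}$, so that $\Delta\FC{n}=E-V$. The first observation is that $\Gamma_n$ is connected for every infinite word, recurrent or not: the one-sided walk $u_0\cdots u_{n-1}\to u_1\cdots u_n\to\cdots$ traced by the word visits every factor of length $n$ and traverses every factor of length $n+1$, so a single connected walk already meets all vertices and all edges.

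Next I would bring in the reversal. Since $\L$ is closed under reversal, $w\mapsto\overline w$ is a well-defined involution $\sigma$ of $\Gamma_n$; it fixes a vertex precisely when that factor is a palindrome and fixes an edge precisely when that length-$(n+1)$ factor is a palindrome. Hence the number $f_V$ of $\sigma$-fixed vertices equals $\PCn{n}$ and the number $f_E$ of $\sigma$-fixed edges equals $\PCn{n+1}$, so the target inequality becomes the purely graph-theoretic statement $f_V+f_E\le(E-V)+2$. A short case analysis fixes the shape of an invariant edge: if $v\in\L[n+1]$ is a palindrome with length-$n$ prefix $p$ and suffix $\overline p$, then either $p=\overline p$ and the edge is a loop at a fixed vertex, or $p\neq\overline p$ and the edge joins the swapped pair $\{p,\overline p\}$; in particular no fixed edge has exactly one fixed endpoint.

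To extract the bound I would pass to the quotient. Realising $\Gamma_n$ as a one-dimensional CW complex $X$, so that $\chi(X)=V-E$, and subdividing each invariant edge at its midpoint, the involution $\sigma$ becomes admissible: its fixed-point set is exactly the $f_V$ palindromic vertices together with the $f_E$ new midpoints, all isolated, whence $\chi(\mathrm{Fix}\,\sigma)=f_V+f_E=\PCn{n}+\PCn{n+1}$. The standard equivariant identity for a $\mathbb Z/2$-action, $\chi(X)=2\chi(X/\sigma)-\chi(\mathrm{Fix}\,\sigma)$, then reads $V-E=2\chi(X/\sigma)-(\PCn{n}+\PCn{n+1})$. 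As $X$ is connected, so is $X/\sigma$, hence $\chi(X/\sigma)\le1$; substituting yields exactly $\PCn{n}+\PCn{n+1}=(E-V)+2\chi(X/\sigma)\le\Delta\FC{n}+2$.

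I expect the real work to sit in the last step: justifying the equivariant Euler-characteristic identity and, above all, computing $\mathrm{Fix}\,\sigma$ correctly. The delicate points are that reversal flips the orientation of an invariant edge rather than fixing it pointwise, so that the midpoint and not the whole edge is fixed, and that a palindromic loop is reflected through its basepoint and so contributes its midpoint as a further isolated fixed point. Subdividing before applying the identity is exactly what makes these contributions transparent and rules out any spurious one-dimensional part of the fixed set. If one prefers to avoid topology, the same count can be done by hand on the subdivided quotient multigraph, using only that a connected graph has Euler characteristic at most $1$.
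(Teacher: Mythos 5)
Your proof is correct, and it takes a genuinely different route from the one in the paper. Note first that the paper never proves this statement itself --- it is quoted from \cite{BaMaPe} --- but it does prove the $\Theta$-generalization (\Cref{nerovnost}), of which this is the special case $\Theta=$ reversal, so that proof is the right benchmark. The paper's argument is a combinatorial count on reduced Rauzy graphs: it first shows that an $n$-simple path contains at most one interior palindrome of length $n$ or $n+1$, then partitions the special factors of length $n$ into palindromic and non-palindromic ones, uses connectedness of the super reduced Rauzy graph $\Gamma_n''$ to bound from below the number of simple paths \emph{not} mapped onto themselves, and finally must treat the periodic case separately via \Cref{ref_req_explain}, since there the reduced graph is empty. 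Your argument replaces all of this bookkeeping by one application of the $\mathbb{Z}/2$ orbit formula $\chi(X)=2\chi(X/\sigma)-\chi(\mathrm{Fix}\,\sigma)$ on the full Rauzy graph, with the fixed set correctly identified (after midpoint subdivision) as the $\PCn{n}$ palindromic vertices plus the $\PCn{n+1}$ midpoints of palindromic edges, and $\chi(X/\sigma)\le 1$ because the quotient is a connected graph. This buys several things: no recurrence or strong connectivity is needed (your one-sided walk gives plain connectivity for \emph{any} infinite word), the periodic case is not special, the whole argument transfers verbatim to an arbitrary involutory antimorphism $\Theta$ (so it also proves \Cref{nerovnost}), and the equality case reads off as ``$X/\sigma$ is a tree,'' which is essentially \Cref{pseudorovnost}. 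What the paper's approach buys in exchange is that it is entirely elementary --- no CW structures or equivariant Euler characteristics to justify --- and, more importantly, its simple-path machinery locates \emph{where} the palindromes sit (as centers of self-mapped simple paths), structural information that the paper reuses later in the proof of \Cref{pseudo_rovnost_rich} and the richness results, whereas your argument only counts them.
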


Given an infinite word $\u$ and $n \in \N$, we define the \textit{Rauzy graph of order $n$ of the word $\u$}, denoted $\Gamma_n(\u)$, as follows.
$\Gamma_n(\u)$ is a directed graph whose set of vertices equals $\L[n]$ and set of edges equals $\L[n+1]$.
There is an edge $e \in \L[n+1]$ going from the vertex $v_1$ to the vertex $v_2$ if $v_1$ is a prefix of $e$ and $v_2$ is a suffix of $e$.
If the word $\u$ is recurrent then for all $n$ the graph $\Gamma_n(\u)$ is strongly connected, i.e., there exists a directed path from any vertex to any other vertex.
Let ${\rm deg}_{+}(w)$ denote the outdegree of a vertex $w$.
Especially for $\Gamma_n(\u)$ we have ${\rm deg}_{+}(w) = \# \left \{ a \in \A \mid wa \in \L \right \}$.
Rauzy graphs are a useful tool to calculate the complexity as we have
$$
\Delta \FC{n} = \sum_{w \in \L[n]} \left ( {\rm deg}_{+}(w) - 1 \right ).
$$

An \textit{$n$-simple path} $p$ is a factor of $\u$ of length at least $n+1$ such that its prefix of length $n$ and its suffix of length $n$ are both special (left or right) and
no interior factor of length $n$ is special.
We can now define the \textit{reduced Rauzy graph of order $n$ of $\u$} $\Gamma_n'(\u)$.
Its set of vertices equals the set of special (right or left) factors of $\L[n]$.
There is an edge $e$ from $v_1$ to $v_2$ if there is a $n$-simple path $p$ which begins with $v_1$ and ends with $v_2$.
Note that if for a given length $N$ there is no special factor,
the word $\u$ is periodic, i.e., can be written as $\u = v^{\omega} = vvv \ldots$, where $v$ is a finite word.
In such cases the definition of reduced Rauzy graph of order $n$ results in an empty graph for all $n \geq N$.

Consider now an infinite word $\u$ with language closed under reversal.
We can define the \textit{super reduced Rauzy graph of order $n$ of $\u$} denoted $\Gamma_n''(\u)$ (first defined in \cite{BuLuGlZa}).
Its set of vertices is formed by unordered pairs $\left \{ w, \overline{w} \right \}$,
where $w \in \L[n]$ is a special factor.
There is an undirected edge $\left \{ e, \overline{e} \right \}$ from $\left \{ v, \overline{v} \right \}$ to $\left \{ w, \overline{w} \right \}$ if there is an $n$-simple path $e$ starting with $v$ or $\overline{v}$ and ending with $w$ or $\overline{w}$.
Note that $\Gamma_n''(\u)$ may have multiple edges or loops.

\subsection{Palindromic richness}

We say that a finite word $w$ is \textit{rich} if it has exactly $|w|+1$ palindromic factors.
It is in fact the maximum number of palindromic factors possible.
An infinite word is rich if all its factors are rich.
This definition arises from the work done in \cite{DrJuPi} and in \cite{GlJuWiZa}.

The following theorem summarizes some known characterizations of rich words.
Listed equivalent definitions of richness have been proven respectively in~\cite{GlJuWiZa}, \cite{BuLuGlZa}, \cite{BuLuGlZa2}.
\begin{thm}[\cite{GlJuWiZa}, \cite{BuLuGlZa}, \cite{BuLuGlZa2}] \label{equiv_rich}
For any infinite word $\u$ the following conditions are equivalent:
\begin{enumerate}
\item $\u$ is rich,
\item any complete return word to a~palindromic factor of $\u$ is a~palindrome,
\item for any factor $w$ of $\u$, every factor of $\u$ that contains $w$ only as its prefix and $\overline w$ only as its suffix is a~palindrome,
\item each factor of $\u$ is uniquely determined by its longest palindromic prefix and its longest palindromic suffix.
\end{enumerate}
\end{thm}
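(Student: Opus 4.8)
The plan is to first reduce richness to a single local condition and then close a cycle of implications, treating (4) separately. For a finite word $v$ I would recall the Droubay--Justin--Pirillo bound: appending a letter to a word creates at most one new palindromic factor, namely the longest palindromic suffix (lps) of the longer word, so $v$ has at most $|v|+1$ palindromic factors, with equality exactly when the lps of every prefix of $v$ is \emph{unioccurrent} in that prefix. Since the number of palindromic factors is invariant under reversal, $v$ is rich iff $\overline v$ is, so for a rich $v$ both its lps and its longest palindromic prefix (lpp) are unioccurrent. Because $\u$ is rich iff each of its factors is rich, I can argue entirely with finite factors, and this ``each new prefix contributes a new palindrome'' criterion is the engine for everything below.

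The backbone is the equivalence (1)$\Leftrightarrow$(2). For (2)$\Rightarrow$(1) I would argue by contradiction: if some factor is not rich, then some prefix $p$ has an lps $q$ occurring at least twice, and the complete return word to $q$ formed by its last two occurrences in $p$ is a palindrome by (2); but this return word is a palindromic suffix of $p$ strictly longer than $q$, contradicting the maximality of the lps. For (1)$\Rightarrow$(2) I would take a palindrome $q$ and a complete return word $r$ to $q$; since $r$ ends with the palindrome $q$, its lps $s$ has $|s|\ge|q|$, and richness forces $s$ to be unioccurrent. A short computation with the two prescribed occurrences of $q$ in $r$ then shows that $s$ cannot have length $|q|$ (that would make $q$ occur twice) and, being a palindrome, must start with $q$; matching this interior occurrence of $q$ against the only two allowed occurrences forces $s=r$, i.e. $r$ is a palindrome.

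Next I would prove (1)$\Rightarrow$(3), after noting that (3)$\Rightarrow$(2) is immediate (a complete return word to a palindrome $q$ is exactly a factor containing $q=\overline q$ only as prefix and only as suffix). Let $v$ contain $w$ only as a prefix and $\overline w$ only as a suffix. The key observation is a length dichotomy for the lps of $v$: any palindromic suffix of $v$ of length $\ge|w|$ ends with $\overline w$, hence, being a palindrome, starts with $w$, and since $w$ occurs only as a prefix this palindrome must be all of $v$; on the other hand a palindromic suffix of length $\ell<|w|$ coincides with the reversal of the length-$\ell$ prefix of $w$, so the longest such suffix equals $\mathrm{lpp}(w)$, which is already a prefix of $v$ and therefore occurs twice. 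By richness the lps is unioccurrent, so the second alternative is impossible and $v$ must be a palindrome. This closes the loop (1)$\Rightarrow$(3)$\Rightarrow$(2)$\Rightarrow$(1).

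Finally (1)$\Leftrightarrow$(4). For the converse I would contrapose: a failure of richness yields a repeated lps and hence two distinct factors sharing the same longest palindromic prefix and suffix. The forward direction is where I expect the real work: given a rich factor $v$ with unioccurrent lpp $p$ and lps $q$, I would show that $p$ and $q$ must overlap inside $v$ and that the palindromic structure on the overlap, together with unioccurrence, pins down both the length of $v$ and all of its letters, so that $(p,q)$ determines $v$ uniquely. Controlling this overlap -- ruling out a gap between the extremal palindromes and showing that no two rich factors of different lengths can share $(p,q)$ -- is the main obstacle; everything else reduces to the unioccurrence criterion established at the outset.
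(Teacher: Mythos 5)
The paper itself offers no proof of \Cref{equiv_rich}: it is a survey statement quoted from \cite{GlJuWiZa}, \cite{BuLuGlZa} and \cite{BuLuGlZa2}, so your attempt has to be judged against those sources rather than against anything in the text. Most of it holds up. The unioccurrence criterion for richness, both directions of (1)$\Leftrightarrow$(2), the implication (1)$\Rightarrow$(3) via the dichotomy on the length of a palindromic suffix of $v$ relative to $|w|$, and the observation (3)$\Rightarrow$(2) are correct and are essentially the classical arguments. Your contrapositive sketch of (4)$\Rightarrow$(1) is also completable: if the longest palindromic suffix $q$ of some prefix $p$ repeats, the complete return word $r$ to $q$ ending where $p$ ends satisfies $\mathrm{lpp}(r)=\mathrm{lps}(r)=q$ (a longer palindromic prefix of $r$ would either produce a third occurrence of $q$ in $r$ or force $r$ to be a palindromic suffix of $p$ longer than $q$), so $r$ and $q$ are two distinct factors sharing the same extremal palindromes.

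The genuine gap is (1)$\Rightarrow$(4), which you yourself flag as unfinished; worse, the strategy you outline would fail at its first step. It is simply not true that the longest palindromic prefix and the longest palindromic suffix of a factor of a rich word must overlap: the factor $bac$ of the Tribonacci word (episturmian, hence rich) has longest palindromic prefix $b$ and longest palindromic suffix $c$, separated by a gap, and the same happens for any factor made of three distinct letters. So there is no ``overlap'' on which to run a palindromic-structure argument, and ``ruling out a gap between the extremal palindromes'' is impossible. What unioccurrence does give you (via the reversal symmetry of richness) is that both the lpp $P$ and the lps $Q$ of a factor $v$ occur exactly once in $v$; consequently, if $u\neq v$ shared the pair $(P,Q)$, neither could be a factor of the other, since an occurrence of $u$ inside $v$ would create a second occurrence of $P$ or of $Q$ in $v$. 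But that still leaves the essential case: two incomparable factors $u$ and $v$, both occurring in $\u$, with the same extremal palindromes. Handling it requires placing occurrences of $u$ and $v$ inside a common superfactor and exploiting the return-word and palindromicity properties (2)/(3) to force $u=v$; this is the actual content of the characterization proved in \cite{BuLuGlZa2}, and your proposal contains no idea for this step.
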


The following theorem gives a characterization of rich infinite words with language closed under reversal.
It also answers the question whether the inequality \eqref{eq:norm_ineq} in \Cref{nerovnost-palindromy} is reached for rich words.
\begin{thm}[\cite{BuLuGlZa}] \label{rich_opulent}
Let $\u$ be an infinite word with language $\L$ closed under reversal.
Then $\u$ is rich if and only if $\PCn{n+1} + \PCn{n} = \Delta \FC{n} + 2$ for all $n \in \N$.
\end{thm}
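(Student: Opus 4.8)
The plan is to prove both implications at once by reformulating the equality, level by level, as a statement about the reversal involution acting on the Rauzy graphs, and then to match that statement with richness through the return-word characterisation of \Cref{equiv_rich}. Throughout I may assume $\u$ is recurrent, since a language closed under reversal is recurrent; hence every $\Gamma_n(\u)$ is strongly connected. The eventually periodic case is purely periodic and makes the relevant graphs degenerate, so it can be checked directly and set aside. The argument then splits into two parts: \textbf{(A)} for each fixed $n$, show that $\PCn{n}+\PCn{n+1}=\Delta\FC{n}+2$ is equivalent to a connected quotient graph being a tree; \textbf{(B)} show that this holds for all $n$ exactly when $\u$ is rich.

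For \textbf{(A)} I would regard $\Gamma_n(\u)$ as a topological graph, with vertex set $\L[n]$ and edge set $\L[n+1]$, carrying the involution $R\colon w\mapsto\overline w$ that reverses edge orientations, and form the quotient $\Gamma_n(\u)/R$. A pair $\{w,\overline w\}$ of vertices gives one vertex of the quotient, so there are $\tfrac12(\FC{n}+\PCn{n})$ of them; a pair of non-palindromic edges gives one edge, while a palindromic edge of length $n+1$ is preserved but flipped, hence folds onto a \emph{pendant} edge terminating in a new ``centre'' vertex. Counting cells of the quotient gives
\[
\chi\bigl(\Gamma_n(\u)/R\bigr)=\frac{\FC{n}+\PCn{n}}{2}+\PCn{n+1}-\frac{\FC{n+1}+\PCn{n+1}}{2}=\tfrac12\bigl(\PCn{n}+\PCn{n+1}-\Delta\FC{n}\bigr).
\]
Since the quotient is a connected graph, $\chi=1-\gamma_n$ with $\gamma_n\ge 0$ its number of independent cycles, whence
\[
\PCn{n}+\PCn{n+1}=\Delta\FC{n}+2-2\gamma_n .
\]
This reproves \Cref{nerovnost-palindromy} and shows that equality at level $n$ holds iff $\gamma_n=0$, i.e. iff $\Gamma_n(\u)/R$ is a tree. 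Its finite model, obtained by suppressing the non-special pass-through vertices, is the super reduced Rauzy graph $\Gamma_n''(\u)$; the only care needed is that a palindromic $n$-simple path closes into a loop of $\Gamma_n''(\u)$ which is really a folded pendant edge and must \emph{not} be counted as a cycle.

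For \textbf{(B)} I would invoke the characterisation of \Cref{equiv_rich} that $\u$ is rich iff every complete return word to a palindromic factor is itself a palindrome. The mechanism is that a genuine (non-folded) cycle of $\Gamma_n(\u)/R$ corresponds to a closed walk in $\Gamma_n(\u)$ that is not invariant under $R$; reading such a walk off from a palindrome to its first return produces a complete return word $r$ to a palindrome with $r\ne\overline r$. Conversely, a complete return word to a palindrome that fails to be a palindrome yields, at the order equal to the length of that palindrome, an $R$-asymmetric closed walk and hence a genuine cycle of the quotient. Thus $\gamma_n=0$ for all $n$ precisely when every complete return word to a palindrome is a palindrome, which by \Cref{equiv_rich} is richness; combined with \textbf{(A)} this proves the theorem.

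The hard part is the correspondence in \textbf{(B)} together with the fold bookkeeping isolated at the end of \textbf{(A)}. The delicate claim to establish is that \emph{palindromicity of a complete return word is exactly $R$-invariance of the corresponding closed walk}, and that an $R$-invariant closed walk is null-homotopic in the quotient (it runs out to a fixed point of $R$ --- the centre of a palindrome --- and retraces itself), whereas an $R$-asymmetric one contributes an independent cycle. Turning this into a clean bijection between independent cycles of $\Gamma_n''(\u)$ and non-palindromic complete returns requires tracking how the longest palindromic prefixes and suffixes (\Cref{equiv_rich}) evolve along $n$-simple paths and how branching at special factors meets the palindromic centres carried by a walk; this accounting, rather than the complexity count of \textbf{(A)}, is where the real work lies.
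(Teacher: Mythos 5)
Your part \textbf{(A)} is correct, and it is a cleaner route to what the paper gets by counting vertices and edges of reduced and super reduced Rauzy graphs: the identity $\chi\bigl(\Gamma_n(\u)/R\bigr)=\tfrac12\bigl(\PCn{n}+\PCn{n+1}-\Delta\FC{n}\bigr)$ yields \Cref{nerovnost-palindromy} and the criterion of \Cref{pseudorovnost} (equality at level $n$ iff the quotient has no independent cycles) in one stroke.

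The genuine gap is in part \textbf{(B)}, and it is not deferred bookkeeping: the dichotomy you build on --- ``an $R$-invariant closed walk is null-homotopic \ldots{} whereas an $R$-asymmetric one contributes an independent cycle'' --- is false for graphs with an orientation-reversing involution, which is the only structure your argument invokes. Consider the directed graph with vertices $w,A,B,C$ (fixed by $R$) and $v,\overline{v}$ (swapped), and edges $w\to v$, $v\to A$, $A\to\overline{v}$, $\overline{v}\to B$, $B\to v$, $v\to C$, $C\to\overline{v}$, $\overline{v}\to w$ (each listed edge is the $R$-image of another listed edge). The quotient is a star, hence a tree, and the fixed-point count gives equality at this level; yet the closed walk $w\to v\to A\to\overline{v}\to B\to v\to C\to\overline{v}\to w$ is based at the fixed vertex $w$, never revisits $w$, and is \emph{not} $R$-invariant (its $R$-image performs the $C$-excursion before the $A$-excursion). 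So ``non-palindromic complete return $\Rightarrow$ cycle in the quotient at that level'' does not follow from the topology of the quotient; what rules such configurations out in actual Rauzy graphs is word combinatorics your proposal never touches (for instance, two distinct palindromes of equal length cannot share a prefix of length $n-1$, so a vertex of $\Gamma_n(\u)$ has at most one palindromic out-neighbour, whereas the graph above needs two). Your converse direction has a second, independent gap: a cycle of the quotient need not pass through any palindromic vertex, and a closed walk in a Rauzy graph need not be spelled by a factor of $\u$, so ``reading the walk off from a palindrome to its first return'' does not produce a complete return word.

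Compare with the paper's own argument (given for the generalization \Cref{pseudo_rovnost_rich}, of which this theorem is the case $\Theta=$ reversal): it never converts graph walks into factors. For richness $\Rightarrow$ equality it starts from factors (``realizations'' of edges of $\Gamma_n''$) and uses \Cref{rich_slova_alternuji} and \Cref{rich_nece} to show that any two realizations between distinct vertices are reversal images of each other, which is exactly the tree-plus-palindromic-loops structure of \Cref{pseudorovnost}. For equality $\Rightarrow$ richness it uses the oriented-return characterization (item (3) of \Cref{equiv_rich}, i.e.\ \Cref{rich_nece}) rather than complete returns to palindromes: from a non-palindromic factor $r$ beginning with $v$ and ending with $\overline{v}$, it extends $v$ to a \emph{right special} factor $vw$, located at the first position where $r$ and $\overline{r}$ disagree, and works at level $|vw|$; there $r$ forces a non-palindromic loop, a double edge, or a cycle of $\Gamma''_{|vw|}$, contradicting \Cref{pseudorovnost}. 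Choosing the level by passing to a special factor, and working with oriented returns $v\cdots\overline{v}$ instead of complete returns to palindromes, are exactly the ideas your sketch is missing; without them the postulated bijection between independent cycles and non-palindromic returns is not available.
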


In \cite{BaPeSta2} a different characterization of rich words with language closed under reversal has been shown.
It is based on bispecial orders of factors.

Let us mention two classes of words that we will refer to later and that are also rich.
The well-known \textbf{Sturmian} words are exactly those words with factor complexity $\FC{n} = n + 1$ for all $n \in \N$.
The second class consists of so-called \textbf{episturmian} words.
They are defined as words whose language is closed under reversal and have at most one left special factor of every length.

For further information on episturmian or rich words you can refer for instance to \cite{Be_survey,DrJuPi,GlJuWiZa,GlJu}.
A~differently constructed class of rich words than the two mentioned above may be found in \cite{AmFrMaPe}.

\subsection{{$\Theta$-palindromicity}}

The reversal mapping can in fact be understood as a special case of a more general mapping $\Theta$: $\A^* \mapsto \A^*$ which
satisfies two conditions:
\begin{enumerate}
  \item it is an involution, i.e., $\Theta^2 = {\rm id}$;
  \item $\Theta$ is an antimorphism, i.e., $\forall v,w \in \A^*$, $\Theta(vw) = \Theta(w)\Theta(v)$.
\end{enumerate}
From now on, $\Theta$ will denote a mapping with such properties.
It is easy to verify that $\Theta$ can only be a composition of the reversal mapping and an involutory permutation of letters.
Such permutation has two types of cycles distinguished by their length $1$ or $2$.

The definitions in the previous section can be modified in the following way.
A finite word $w$ is called a \textit{$\Theta$-palindrome} if $w = \Theta(w)$.
\textit{$\Theta$-palindromic complexity} is a mapping $\N \mapsto \N$ defined as
$$
\PC{n} := \# \left \{ w \in \L[n] \mid w = \Theta(w) \right \}.
$$
The language of $\u$ is \textit{closed under $\Theta$} if for any factor $w \in \L$ we have $\Theta(w) \in \L$.
The definition of super reduced Rauzy graph is analogous: for words closed under $\Theta$ the reversal mapping is replaced by $\Theta$.

As we will see later, a~crucial difference between the reversal mapping and a $\Theta$ mapping different from the reversal mapping
 is that there exists a letter $a$ such that $a \neq \Theta(a)$.
Therefore there exist words that have no non-empty $\Theta$-palindromic suffix at all.

The research on $\Theta$-palindromicity has led to several generalizations of known results.
We will recall one such generalization.
A word is  \textbf{$\Theta$-episturmian} if its language is closed under $\Theta$
and if for each $n$ there exists at most one left special factor of length $n$.
Such words are discussed for instance in \cite{BuLuLuZa}.
In \cite{AnZaZo} these words are called pseudopalindromic.
A more general class that we will mention later is discussed in \cite{BuLuLuZa2}.

\section{$\Theta$-palindromic and factor complexity}
\label{sec:pcfc}

As for the reversal mapping, if $\u$ has its language closed under $\Theta$, then it is recurrent.
To see that, consider a non-$\Theta$-palindromic factor $w \in \L$.
As $\L$ is closed under $\Theta$ we can find an~occurrence of $\Theta(w)$ and so find a factor $r$
 having $w$ as a prefix (respectively suffix) and $\Theta(w)$ as a suffix (respectively prefix).
If $r$ is not a $\Theta$-palindrome, then we can find a~different occurrence of $\Theta(r)$ and in it a second occurrence of $w$.
If $w$ or $r$ is a $\Theta$-palindrome it suffices to consider a non-$\Theta$-palindromic factor that contains $w$ or $r$ as factor.
Such factor either exists (for instance one can look only among prefixes of $\u$) or $\u$ is trivial (periodic with period $1$).

The following observation, stated as a lemma, will be used in the proof of \Cref{nerovnost}.
We provide a short proof.

\begin{lem}\label{ref_req_explain}
Let $\u$ be an infinite word with language closed under $\Theta$.
If there exists an integer $N$ such that there is no special factor of length $N$,
then for all $n \geq N$ we have $\PC{n} + \PC{n+1} = 2$.
\end{lem}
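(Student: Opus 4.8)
The plan is to reduce the statement to a counting problem on the single period of $\u$. \emph{First}, I would make the periodicity quantitative. The absence of a special factor of length $N$ means in particular that no factor of length $N$ is right special, so every vertex of the Rauzy graph $\Gamma_N(\u)$ has outdegree $1$; since $\u$ is recurrent (its language is closed under $\Theta$), $\Gamma_N(\u)$ is strongly connected and therefore consists of a single directed cycle through all of its vertices. Hence $\u = v^\omega$ is periodic, its minimal period $p$ equals $\FC{N}$ (the cycle has length $\FC{N}$, while $\FC{N} \leq p$ because $\u$ has period $p$), and for every $n \geq N$ one has $\FC{n} = p$ with the $p$ factors of length $n$ pairwise distinct.

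\emph{Second}, I would encode the period as a cyclic word $C = c_0 c_1 \cdots c_{p-1}$, indices taken modulo $p$, so that for $n \geq N$ the factors of length $n$ are exactly the $p$ distinct windows $f_i = c_i c_{i+1} \cdots c_{i+n-1}$, $i \in \Z/p\Z$. Writing $\Theta = \pi \circ R$, where $R$ is the reversal and $\pi$ is the accompanying involutory permutation of letters applied letterwise, the closure of $\L$ under $\Theta$ should force a reflection symmetry of $C$: equating the $\Theta$-image of a window of length at least $p$ with the unique window it must coincide with yields a constant $s \in \Z/p\Z$ with $\pi(c_m) = c_{s-m}$ for all $m$. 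Extracting this single identity $\pi(c_m) = c_{s-m}$ from closure under $\Theta$, with the index bookkeeping for reversal-composed-with-permutation carried out correctly modulo $p$, is the step I expect to demand the most care.

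\emph{Finally}, the symmetry gives $\Theta(f_i) = f_{s-n+1-i}$ for windows of length $n$, and since these $p$ windows are distinct, $f_i$ is a $\Theta$-palindrome exactly when $i \equiv s-n+1-i$, that is, when $2i \equiv s-n+1 \pmod p$. Counting the solutions of this congruence closes the argument. If $p$ is odd then $2$ is invertible modulo $p$, there is exactly one solution, and $\PC{n} = 1$ for every $n \geq N$, so $\PC{n} + \PC{n+1} = 2$. If $p$ is even the congruence has two solutions when $s-n+1$ is even and none when it is odd; this parity alternates as $n$ increases by $1$, so exactly one of $\PC{n}, \PC{n+1}$ equals $2$ and the other $0$. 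In both cases $\PC{n} + \PC{n+1} = 2$ for all $n \geq N$, which is the claim.
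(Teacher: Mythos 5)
Your proof is correct, and it takes a genuinely different route from the paper's. The paper stays inside the Rauzy-graph formalism it uses everywhere else: for $n \geq N$ the graph $\Gamma_n(\u)$ is a single cycle, and identifying each factor $w$ with $\Theta(w)$ collapses that cycle into a ``super Rauzy graph'' consisting of one elementary path with at most one loop at each of its two ends; the structure then forces exactly one $\Theta$-palindrome of length $n$ or $n+1$ to sit at each end of the path (the end vertex itself, of length $n$, when there is no loop there; the loop, of length $n+1$, otherwise) and none in the interior, which gives the total of $2$. You instead make the periodicity quantitative and explicit: minimal period $p = \FC{N}$, the period encoded as a primitive cyclic word $c_0\cdots c_{p-1}$, closure under $\Theta$ distilled into the reflection symmetry $\pi(c_m) = c_{s-m}$, and the $\Theta$-palindromes of length $n$ identified with the solutions of $2i \equiv s-n+1 \pmod{p}$, whose number summed over two consecutive lengths is always $2$. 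Your index bookkeeping is right ($\Theta(f_i) = f_{s-n+1-i}$, and distinctness of the $p$ windows for $n \geq N$ follows from $\FC{n} = p$, while a window of length at least $p$ suffices to extract $s$ because the resulting identity covers all residues modulo $p$), and the supporting facts you invoke (recurrence from closure under $\Theta$, monotonicity of $\FC{n}$, the bound $\FC{n} \leq p$ for $p$-periodic words) are standard. Each approach buys something: yours is more elementary and yields sharper information than the lemma asks for --- $\PC{n} = 1$ for every $n \geq N$ when $p$ is odd, and $\PC{n}$ alternating between $2$ and $0$ when $p$ is even --- whereas the paper's argument is exactly the degenerate (cycle) case of the super-reduced-Rauzy-graph analysis that drives \Cref{nerovnost} and \Cref{pseudo_rovnost_rich}, so within the paper it requires no additional machinery.
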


\begin{proof}
As $\u$ is recurrent, it is clear that for all $n\geq N$, $\Gamma_n(\u)$ is a cycle.
Take $n \geq N$.
We will consider an undirected graph $G$ whose vertices are defined by unordered pairs $\{w,\Theta(w)\}$ for all $w \in \L[n]$.
Let $e$ be an edge of $\Gamma_n(\u)$ going from the vertex $w$ to the vertex $v$.
For each pair $\{e,\Theta(e)\}$, there is an edge in $G$ between the vertex $\{w,\Theta(w)\}$ and the vertex $\{v,\Theta(v)\}$.
Note that there may be multiple edges and also loops (in the case $e = \Theta(e)$).
According to the terminology already used this graph may be called ``super Rauzy graph of order $n$''.

As $\Gamma_n(\u)$ is a cycle, it is clear that $G$ is formed by one elementary path $p$ and at most $2$ loops - one possibly on the first vertex of $p$, one possibly on the last vertex of $p$.
Suppose there is an interior vertex $\{w,\Theta(w)\}$ in $p$ such that $w = \Theta(w)$.
Then $w$ would be a special factor of length $n$ and we have a contradiction.
Denote by $\{s,\Theta(s)\}$ the first vertex of $p$.
If there is a loop on $\{s,\Theta(s)\}$, then it is clear that $s \neq \Theta(s)$ as otherwise $s$ would be special.
On the other hand, if there is not a loop on $\{s,\Theta(s)\}$, then $s = \Theta(s)$ as $\Gamma_n(\u)$ would be disconnected.
The reasoning is analogous for the last vertex of $p$.

As a loop in $G$ is in fact a $\Theta$-palindrome of length $n+1$,
we have at most $2$ $\Theta$-palindromes of length $n$ or $n+1$ - one associated with the first vertex of $p$ and one with the last vertex of $p$.

\end{proof}

We will now prove the analogue of \Cref{nerovnost-palindromy} for languages closed under $\Theta$.

\begin{proof}[Proof of \Cref{nerovnost}]
Consider the operation $\theta$ which to every vertex $w$ of a Rauzy graph associates $\Theta(w)$ and to every edge $e$ associates $\Theta(e)$.
Since we supposed the language closed under $\Theta$, $\theta$ is well defined on all Rauzy graphs of $\u$.
In fact, $\theta$ maps every Rauzy graph $\Gamma_n (\u)$ onto itself.
We will omit $\u$ in the notation of Rauzy graphs from now on.

Fix $n$.
We are interested in $n$-simple paths.
As $\L$ is closed under $\Theta$, $\u$ is recurrent.
Thus the graph $\Gamma_n$ is strongly connected and every vertex and every edge belongs to an $n$-simple path.

For an edge satisfying $\theta(e) = e$ we find the $n$-simple path $p$ which contains $e$.
Then the operation $\theta$ must map the path represented by $p$ onto itself, i.e., $p = \Theta(p)$.
Similarly, for a non-special vertex $w$ such that $\theta(w) = w$, the $n$-simple path containing $w$ is mapped by $\theta$ onto itself.

Note that an $n$-simple path can contain at most one $\Theta$-palindrome of length $n$ or $n+1$ which is not its prefix or suffix.
To prove that suppose the contrary.
Let $z$ be an $n$-simple path containing two interior $\Theta$-palindromic factors of length $n$ or $n+1$.
Denote $p_1$ the $\Theta$-palindrome with leftmost occurrence in $z$ which is not a prefix of $z$.
Since there is another $\Theta$-palindrome of length $n$ or $n+1$ and $z$ is a $\Theta$-palindrome,
there is another occurrence of $p_1$ as an interior factor of $z$.
Let $r$ be the shortest left special factor such that $p_1$ is its suffix.
If there is no occurrence of $r$ in $z$,
 then there is another occurrence in $z$ of the prefix of $z$ of length $n$ which is neither its prefix nor suffix.
If there is at least one occurrence of $r$ in $z$,
 then either the prefix of length $n$ of $r$ is a special factor of $z$ different from its prefix or suffix,
 or again the prefix of $z$ of length $n$ has another occurrence in $z$ different from its prefix or suffix.
In both cases, we can find an interior occurrence of a special factor of length $n$ - a contradiction with $z$ being an $n$-simple path.

To give an upper bound on $\PC{n} + \PC{n+1}$ therefore consists in finding the number of $n$-simple paths
 in $\Gamma_n$ that are mapped by $\theta$ onto themselves
and the number of special $\Theta$-palindromes of length $n$.
For that we can consider the reduced Rauzy graph $\Gamma_n'$.

The set of vertices of $\Gamma_n'$ can be partitioned into two disjoint subsets.
The first subset is given by special $\Theta$-palindromes of length $n$.
Let $\alpha$ denote the number of such vertices.
The second subset is formed by the rest of the vertices -- non-$\Theta$-palindromic special factors of length $n$.
Let the number of such vertices be denoted by $2 \beta$.
The total number of vertices in $\Gamma_n'$ is then $\alpha + 2 \beta$.

It is clear that the super reduced Rauzy graph $\Gamma_n''$ has exactly $\alpha+\beta$ vertices.
As $\Gamma_n''$ is connected, it has at least $\alpha + \beta - 1$ edges.
Thus the number of edges in $\Gamma_n'$ is at least $2(\alpha + \beta - 1)$.
These edges correspond to the $n$-simple paths in $\Gamma_n$ which are not mapped by $\theta$ onto themselves.

As already mentioned, the number of $\Theta$-palindromes of length $n$ or $n+1$ is bounded by the number of $n$-simple paths in $\Gamma_n$ which are mapped onto themselves,
and the number of special $\Theta$-palindromic vertices.
We thus have
$$
\PC{n} + \PC{n+1} \leq \sum_{w \text{ is special}} {\rm deg}_{+}(w) - 2(\alpha + \beta - 1) + \alpha,
$$
where the first summand is the number of all $n$-simple paths in $\Gamma_n$,
the second summand estimates the number of $n$-simple paths which are not mapped onto themselves,
and the third counts the number of special $\Theta$-palindromic vertices.
We obtain
\begin{align} \label{last_eq}
\PC{n} + \PC{n+1} & \leq &  \displaystyle\sum_{w \in \L[n] \text{ is special}} {\rm deg}_{+}(w) - (\alpha + 2 \beta) + 2 = \notag \\
 & = & \displaystyle\sum_{w \in \L[n] \text{ is special}} \left( {\rm deg}_{+}(w) - 1 \right) + 2 =  \\
 & = & \Delta \FC{n} + 2. \notag
\end{align}

The proof is finished except for the special case where the word $\u$ is periodic.
This implies existence of $N$ such that for all $n \geq N$ we have $\Delta \FC{n} = 0$, i.e., there is no special factor of length $n$.
It this case the reduced Rauzy graph of order $n$ is empty.
 According to \Cref{ref_req_explain} we have $\PC{n} + \PC{n+1} = 2$ for all $n \geq N$, which completes the proof.

\end{proof}

\begin{coro}
\label{pseudorovnost}
Let $ \L $ be closed under $\Theta$.
Let $n \geq 0$.
The equality
\begin{equation}\label{eq:pseudorovnost}
\PC{n} + \PC{n+1} = \Delta \FC{n} + 2
\end{equation}
holds if and only if both of the following conditions are satisfied:
\begin{enumerate}
  \item the graph $\Gamma_n''$ after removing loops is a tree,
  \item any simple path forming a loop in the graph $\Gamma_n''$ is mapped by $\theta$ onto itself.
\end{enumerate}
\end{coro}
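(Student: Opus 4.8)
The plan is to return to the proof of \Cref{nerovnost} and isolate the two inequalities it hides inside a single chain; the equality \eqref{eq:pseudorovnost} will then hold precisely when both are tight. Keeping the notation of that proof, let $\alpha$ be the number of special $\Theta$-palindromes of length $n$, let $2\beta$ be the number of non-$\Theta$-palindromic special factors of length $n$, let $S$ be the number of $n$-simple paths $p$ with $\theta(p)=p$, and let $T$ be the number of unordered pairs $\{p,\Theta(p)\}$ with $p\ne\Theta(p)$, so that $\sum_{w\text{ special}}{\rm deg}_+(w)=S+2T$. The two steps of the proof are the bound $\PC{n}+\PC{n+1}\le S+\alpha$, followed by $S\le\sum_{w}{\rm deg}_+(w)-2(\alpha+\beta-1)$, which is the same as $T\ge\alpha+\beta-1$.

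First I would argue that $\PC{n}+\PC{n+1}=S+\alpha$ is in fact always an equality. The $\alpha$ special $\Theta$-palindromic vertices account exactly for the special $\Theta$-palindromes of length $n$, so it suffices to exhibit a bijection between the remaining $\Theta$-palindromes -- the non-special ones of length $n$ together with all $\Theta$-palindromic factors of length $n+1$ -- and the $\theta$-fixed $n$-simple paths. Sending each $\theta$-fixed path $p=\Theta(p)$ to its unique central factor (of length $n$ or $n+1$ according to the parity of $|p|-n$), which is a $\Theta$-palindrome and, for $|p|>n$, interior and hence non-special, gives one direction; conversely a non-special $\Theta$-palindrome of length $n$ is interior to a unique $n$-simple path and a $\Theta$-palindromic factor of length $n+1$ lies on a unique $n$-simple path, and in each case that path is $\theta$-fixed with the given factor as its center. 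Checking that this correspondence is well defined, injective, and surjective yields the equality.

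Granting this, \eqref{eq:pseudorovnost} holds if and only if the second step is tight, i.e.\ $T=\alpha+\beta-1$, and I would read this off from the structure of $\Gamma_n''$. Its vertices are the $\alpha+\beta$ pairs $\{w,\Theta(w)\}$ and its edges are the pairs $\{p,\Theta(p)\}$, so $T$ counts exactly the edges with $p\ne\Theta(p)$. The decisive observation is that a $\theta$-fixed path runs from a vertex $v$ to $\Theta(v)$ and therefore always yields a loop of $\Gamma_n''$; consequently every non-loop edge has $p\ne\Theta(p)$ and is counted by $T$, giving $T\ge k$, where $k$ is the number of non-loop edges, with equality exactly when every loop of $\Gamma_n''$ comes from a $\theta$-fixed path -- equivalently, when every simple path forming a loop is mapped by $\theta$ onto itself, which is condition (2). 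Since $\u$ is recurrent, $\Gamma_n''$ is connected, so after deleting loops it is still connected on $\alpha+\beta$ vertices and hence $k\ge\alpha+\beta-1$, with equality exactly when this loopless graph is a tree, which is condition (1). Combining the two, $T=\alpha+\beta-1$ is equivalent to (1) and (2) together.

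I expect the main obstacle to be the first step, namely the verification that $\PC{n}+\PC{n+1}=S+\alpha$ genuinely holds with no slack. The delicate points are the boundary cases of the bijection: one must show that a $\Theta$-palindromic factor of length $n+1$ is always the center of its path, and never merely a prefix or suffix edge of a longer $\theta$-fixed path -- which is ruled out because the vertex $\Theta(v)$ adjacent to a special vertex $v$ would then be a special interior vertex, contradicting $n$-simplicity unless $|p|=n+1$ -- and the degenerate single-edge path $|p|=n+1$ must be accounted for on its own. Finally, the periodic case, where for large $n$ there are no special factors and $\Gamma_n''$ is empty, falls outside this graph count and should be disposed of directly using \Cref{ref_req_explain}, where both sides of \eqref{eq:pseudorovnost} equal $2$.
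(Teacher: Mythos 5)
Your proposal is correct and follows essentially the same route as the paper: both revisit the counting from the proof of \Cref{nerovnost} (with your $S$, $T$ playing the roles of the paper's $p$, $m/2$) and show that equality \eqref{eq:pseudorovnost} holds exactly when the connectivity bound in $\Gamma_n''$ is tight, i.e.\ when the loopless graph is a tree and all loops are $\theta$-fixed. The only place you go beyond the paper is in proving explicitly, via the center-of-a-$\theta$-fixed-path bijection, that $\PC{n} + \PC{n+1} = S + \alpha$ holds with no slack -- the paper simply writes this identity ($\alpha + p = \PC{n} + \PC{n+1}$) without justification, even though it is genuinely needed for the ``if'' direction of the equivalence, so your added detail is a welcome tightening rather than a departure.
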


\begin{proof}

First, we rewrite the equality in the relation \eqref{last_eq} as
$$
\Delta \FC{n} + 2 = \# {\text{$n$-simple paths in $\Gamma_n'$}} - \alpha - 2\beta + 2
$$
where $\alpha$ is again the number of special $\Theta$-palindromes of length $n$
and $2 \beta$ is the number of non-$\Theta$-palindromic special factors of length $n$.
Let $p$ denote the number of $n$-simple paths that are mapped by $\theta$ onto themselves
 and $m$ the number of $n$-simple paths that are not mapped onto themselves.

We will prove the equivalence directly.
Suppose that $\PC{n} + \PC{n+1} = \Delta \FC{n} + 2$.
We have
$$
\alpha + p = \PC{n} + \PC{n+1} = \Delta \FC{n} + 2 = p + m - \alpha - 2\beta + 2.
$$
Therefore
$m = 2 (\alpha + \beta - 1)$.
From the proof of \Cref{nerovnost}, this is equivalent with having
 exactly the minimum number of non-$\Theta$-palindromic $n$-simple paths needed so that the graph $\Gamma_n'$ is strongly connected.
As loops are not needed to have the graph strongly connected, they are counted in the number $p$.
These two facts are equivalent with $\Gamma_n''$ being a tree after removing loops and all loops being $\Theta$-palindromic.

\end{proof}

A natural question is whether there exist such words for which the equality \eqref{eq:pseudorovnost} is satisfied for all $n$.
The answer is positive only for $\Theta$ equal to the reversal mapping.
In any other case we have $\PC{1} < \# \A$ and thus
$$
\PC{0} + \PC{1} < \left ( \# \A - 1 \right ) + 2 = \Delta \FC{0} + 2.
$$
However, there are words saturated in $\Theta$-palindromes that satisfy the equality for all $n \geq 1$.
Examples of such words are given in Section \ref{sec:examples}.

\section{$\Theta$-Richness} \label{sec:pseudorich}

Recall that a finite word is rich if it has the maximum possible number of palindromic factors.
If we want to use the same notion of richness for $\Theta$-palindromes, we need to take into consideration that there is always fewer $\Theta$-palindromes since some letters may not be $\Theta$-palindromes.
To define richness we need to evaluate the number of these letters contained in a given word.
We introduce the set $\gamma(w)$ as follows
$$
\gamma(w) :=   \big\{ \{a,\Theta(a)\} \mid a \in \A, a \neq \Theta(a), a \text{ or } \Theta(a) \in {\mathcal L}(w) \big\}.
$$

It is clear that if $w'$ is a prefix of some finite word $w$, then $\# \gamma(w') \leq \# \gamma(w)$.
It is also clear that if for some letter $x \in \A$ we have $\gamma(w) \neq \gamma(wx)$, then the word $wx$ has no $\Theta$-palindromic suffix as $\Theta(x)$ does not occur in it.
Also, as $\gamma(wx) = \gamma(w) \cup  \big \{ \left \{x,\Theta(x) \right \}  \big \}$, one can see that in this case $\# \gamma(wx) = \# \gamma(w) + 1$.

The following proposition gives the maximum number of $\Theta$-palindromes that may occur in a finite word.

\begin{prop}
\label{w_plus_jedna}
Let $w$ be a finite word.
Then the number of its $\Theta$-palindromic factors is bounded by $|w| + 1 - \# \gamma(w)$.
\end{prop}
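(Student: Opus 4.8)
The plan is to build $w$ one letter at a time and to bound, at each step, the number of \emph{new} $\Theta$-palindromic factors that appear. Write $w = w_0 w_1 \cdots w_{n-1}$ with $n = |w|$, and for $0 \le k \le n$ let $w^{(k)} = w_0 \cdots w_{k-1}$ be the prefix of length $k$, so $w^{(0)} = \varepsilon$ and $w^{(n)} = w$; let $P_\Theta(v)$ denote the number of $\Theta$-palindromic factors of a word $v$. A factor of $w^{(k+1)}$ that is not already a factor of $w^{(k)}$ must end at the last position, i.e.\ must be a suffix of $w^{(k+1)}$. Hence the number of $\Theta$-palindromic factors gained in passing from $w^{(k)}$ to $w^{(k+1)}$ equals the number of $\Theta$-palindromic suffixes of $w^{(k+1)}$ that do not occur in $w^{(k)}$. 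Since $\varepsilon = \Theta(\varepsilon)$, the empty word gives the initial count $1$, and it remains to estimate these increments.

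The first key step is to show that each letter creates at most one new $\Theta$-palindrome, the $\Theta$-analogue of the classical lemma of Droubay--Justin--Pirillo. Suppose $u$ and $v$ are two distinct $\Theta$-palindromic suffixes of $w^{(k+1)}$ with $|u| < |v|$. Then $u$ is a suffix of $v$, say $v = x u$; applying $\Theta$ and using $\Theta(v) = v$ and $\Theta(u) = u$ gives $v = u\,\Theta(x)$, so $u$ is also a \emph{prefix} of $v$. As $v$ is a suffix of $w^{(k+1)}$ and $|u| < |v|$, this prefix-occurrence of $u$ ends strictly before the last letter of $w^{(k+1)}$, whence $u$ is a factor of $w^{(k)}$. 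Therefore only the longest $\Theta$-palindromic suffix of $w^{(k+1)}$ can fail to occur in $w^{(k)}$, and at most one $\Theta$-palindrome is gained at each step. Summing over the $n$ steps already yields the weaker bound $|w| + 1$.

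To recover the correction term $\#\gamma(w)$, I would isolate the steps at which $\gamma$ grows. By the remarks preceding the proposition, $\#\gamma$ increases by exactly $1$ whenever $w_k \ne \Theta(w_k)$ and neither $w_k$ nor $\Theta(w_k)$ occurs in $w^{(k)}$; at such a step $w^{(k+1)}$ has no non-empty $\Theta$-palindromic suffix, since any such suffix ends in $w_k$ and, being $\Theta$-invariant, begins with $\Theta(w_k)$, forcing $\Theta(w_k)$ to occur and contradicting the assumption. Thus those steps contribute $0$ new $\Theta$-palindromes rather than $1$. Letting $S$ be the set of steps where $\gamma$ grows, we have $\#S = \#\gamma(w)$, because $\gamma$ starts empty, grows by at most $1$ per letter, and each pair of $\gamma(w)$ is introduced exactly once. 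Combining the two bounds gives
$$
P_\Theta(w) \;\le\; 1 + \sum_{k \notin S} 1 + \sum_{k \in S} 0 \;=\; 1 + \bigl(n - \#\gamma(w)\bigr) \;=\; |w| + 1 - \#\gamma(w).
$$

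The main obstacle is the first key step: one must check that the reflection argument — that a shorter $\Theta$-palindromic suffix reappears as a prefix — uses only that $\Theta$ is an involutory antimorphism, and that the resulting earlier occurrence genuinely lies inside $w^{(k)}$. The bookkeeping in the final sum, in particular the identity $\#S = \#\gamma(w)$ and the fact that the $\gamma$-growing steps are precisely those carrying no new $\Theta$-palindrome, is routine once the incremental picture is in place.
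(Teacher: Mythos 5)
Your proof is correct and follows essentially the same route as the paper's: an incremental (inductive) letter-by-letter count in which the reflection argument $v = xu = u\,\Theta(x)$ shows only the longest $\Theta$-palindromic suffix can be new, while the steps where $\gamma$ grows contribute no new $\Theta$-palindrome at all. The only difference is cosmetic — you phrase it as a telescoping sum over positions rather than as a formal induction on $|w|$.
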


\begin{proof}
The proof is done by induction on $|w|$.
The statement is clear for $|w| \leq 1$.
Suppose that for $|w| \leq N$, $w$ contains at most $|w| + 1 - \# \gamma(w)$ different $\Theta$-palindromes.
Let $x \in \A$.
We will count the number of $\Theta$-palindromes in the word $wx$.
It is clear that the number of $\Theta$-palindromes in $wx$ is equal to
the number of $\Theta$-palindromes in $w$ plus
the number of new $\Theta$-palindromic factors -- $\Theta$-palindromic suffixes of $wx$ that do not occur in $w$.

First suppose we have $\gamma(w) \neq \gamma(wx)$.
As there is no $\Theta$-palindromic suffix
the number of $\Theta$-palindromes in $wx$ equals the number of $\Theta$-palindromes in $w$ which is at most $|w| + 1 - \# \gamma(w) = |wx| +1 - \# \gamma(wx)$.

Suppose $\gamma(wx) = \gamma(w)$.
Suppose for contradiction that $wx$ contains more than $|wx| + 1 - \# \gamma(wx)$ different $\Theta$-palindromes.
As $|wx| + 1 - \# \gamma(wx) = \left( |w| + 1 - \# \gamma(w) \right) + 1$,
it is clear that there must be at least two different $\Theta$-palindromes $p$ and $q$ such that $w = yp$ and $w = zq$ that do not occur previously in $w$ (as they were not yet counted).
We can assume that $|p| > |q|$ and $p$ is the longest of such suffixes of $wx$.
On the other hand we have $p = vq = q\Theta(v)$ where $|v| > 0$.
Therefore every such factor $q$ occurs already in $w$ and has already been counted in the number of $\Theta$-palindromes in $w$ -- a contradiction.
We can see that the number of $\Theta$-palindromes in $wx$ is equal to either the number of $\Theta$-palindromes in $w$ or the number of $\Theta$-palindromes in $w$ plus $1$.
Therefore the upper bound is $|w| + 1 - \# \gamma(w) + 1$ which equals $|wx| + 1 - \# \gamma(wx)$.
\end{proof}

We can see from the preceding proof that if there is a new $\Theta$-palindromic factor,
it is the longest $\Theta$-palindromic suffix.
We will now define a class of words having maximum number of $\Theta$-palindromes.

\begin{defi}
A finite word $w$ is {\bf $\Theta$-rich}
if it contains $|w| + 1 - \# \gamma(w)$ $\Theta$-palindromic factors.

An infinite word is {\bf $\Theta$-rich}
if all its factors are $\Theta$-rich.
\end{defi}

One can see that all factors of a finite $\Theta$-rich word are $\Theta$-rich as well.
Thus, if an infinite word has all its prefixes $\Theta$-rich,
then all its factors are $\Theta$-rich and it is by definition $\Theta$-rich.

The following equivalence is a consequence of the last remark and of the proof of \Cref{w_plus_jedna}.
It is a $\Theta$-rich analogue of results in \cite{DrJuPi,GlJuWiZa}.

\begin{prop} \label{ups}
An infinite or finite word $\u$ is $\Theta$-rich
if and only if
the longest $\Theta$-palindromic suffix of each prefix $p$ is unioccurrent
except for prefixes having the form $px$,
with $x \in \A$ and $\gamma(p) \neq \gamma(px)$.
\end{prop}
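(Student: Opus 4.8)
The plan is to reduce $\Theta$-richness to a step-by-step comparison along the prefixes of $\u$. By the remark following the definition of $\Theta$-richness, $\u$ (finite or infinite) is $\Theta$-rich if and only if every prefix is $\Theta$-rich, that is, if and only if for each prefix $p$ the number $P(p)$ of distinct $\Theta$-palindromic factors of $p$ attains the bound $B(p) := |p| + 1 - \#\gamma(p)$ of \Cref{w_plus_jedna}. I would therefore walk along the chain of prefixes $p_0 = \varepsilon, p_1, p_2, \dots$, where $p_{k+1}$ is obtained from $p_k$ by appending a single letter $x$, and compare the increments of $P$ and of $B$ at each step.

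The increments can be read directly from the proof of \Cref{w_plus_jedna}. If $\gamma(p_k) \neq \gamma(p_k x)$, then $|p_{k+1}|$ and $\#\gamma(p_{k+1})$ both grow by one, so $B$ is unchanged; moreover $p_{k+1}$ then has no non-empty $\Theta$-palindromic suffix, so no new $\Theta$-palindrome appears and $P$ is unchanged as well. If instead $\gamma(p_k) = \gamma(p_k x)$, then $B$ increases by exactly one while $P$ increases by at most one, and, by the remark after \Cref{w_plus_jedna}, any newly created $\Theta$-palindrome must be the longest $\Theta$-palindromic suffix $s$ of $p_{k+1}$. Consequently $B(p_k) - P(p_k)$ is non-decreasing in $k$ and starts at $B(\varepsilon) - P(\varepsilon) = 0$; hence $\u$ is $\Theta$-rich precisely when the increment of $P$ matches that of $B$ at every step.

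It then remains to translate ``the increment matches'' into the unioccurrence condition. The $\gamma$-changing steps contribute matching (zero) increments automatically, so they impose no constraint, and these are exactly the excepted prefixes of the form $px$ with $\gamma(p) \neq \gamma(px)$. For a $\gamma$-preserving step the increments match if and only if the longest $\Theta$-palindromic suffix $s$ of $p_{k+1}$ is a genuinely new factor, i.e.\ does not already occur in $p_k$. The key observation is that $s$, being a suffix of $p_{k+1} = p_k x$, occurs in $p_k$ if and only if it has an occurrence in $p_{k+1}$ not ending at the last position, which is precisely the failure of $s$ to be unioccurrent in $p_{k+1}$. Thus the increment matches if and only if $s$ is unioccurrent in $p_{k+1}$, and collecting these conditions over all $k$ gives the stated equivalence.

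The step I expect to be the main obstacle is not the ``new factor $\iff$ unioccurrent suffix'' observation, which is routine, but the careful bookkeeping of the two cases --- in particular confirming that the $\gamma$-changing prefixes genuinely carry no information about $\Theta$-richness, since this is what forces the exception clause and marks the one real departure from the classical palindromic setting. One must also handle the degenerate situation, occurring within a $\gamma$-preserving step, in which $p_{k+1}$ has no non-empty $\Theta$-palindromic suffix at all: there $s = \varepsilon$ is never unioccurrent in a non-empty word, so the step correctly fails to match and the word fails to be $\Theta$-rich, keeping the equivalence consistent.
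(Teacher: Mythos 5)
Your proposal is correct and follows essentially the same route as the paper: the paper gives no separate proof, stating only that the proposition is "a consequence of the last remark and of the proof of \Cref{w_plus_jedna}", and your argument is precisely that consequence spelled out --- reduce $\Theta$-richness to richness of all prefixes via the remark, then compare the increments of the palindrome count and of the bound $|p|+1-\#\gamma(p)$ letter by letter, using the fact from the proof of \Cref{w_plus_jedna} that any new $\Theta$-palindromic factor must be the longest $\Theta$-palindromic suffix. Your handling of the two cases ($\gamma$-changing steps contribute nothing, $\gamma$-preserving steps match exactly when that suffix is unioccurrent, including the degenerate empty-suffix case) is exactly the bookkeeping the paper leaves implicit.
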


\Cref{ups} leads to the following corollary, which is a $\Theta$-rich analogue of a result in \cite{BuLuGlZa}.

\begin{coro} \label{rich_slova_alternuji}
Let $\u$ be an infinite $\Theta$-rich word.
Then for all $w \in \L$, $w \neq \Theta(w)$, the occurrences of $w$ and $\Theta(w)$ alternate.
\end{coro}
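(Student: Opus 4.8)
The plan is to prove the contrapositive by a minimal-counterexample (infinite-descent) argument, with \Cref{ups} as the engine. First I would reduce to a local statement. List the occurrences of $w$ and of $\Theta(w)$ in increasing order of position; they alternate if and only if no two occurrences of the same word are adjacent in this merged list. Since $\Theta(\Theta(w))=w$ and $\Theta(w)\neq w$, the two words play symmetric roles, so it suffices to exclude what I will call a \emph{bad pair for $w$}: two positions $i<j$ with $u_i\cdots u_{i+|w|-1}=u_j\cdots u_{j+|w|-1}=w$ such that no position in $(i,j)$ carries an occurrence of $w$ or of $\Theta(w)$. Equivalently, $R:=u_i\cdots u_{j+|w|-1}$ is a complete return word to $w$ containing no occurrence of $\Theta(w)$.

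Assume a bad pair exists, and among all bad pairs for $w$ and all bad pairs for $\Theta(w)$ choose one whose left endpoint is smallest; say it is a bad pair $i<j$ for $w$. I look at the prefix $p:=u_0\cdots u_{j+|w|-1}$ ending at the later occurrence, and write $p=p'x$, where $x$ is the last letter of $w$. Because $i<j$, the letter $x$ already occurs in $p'$ (inside the block at $i$), so appending it does not enlarge $\gamma$; hence $\gamma(p')=\gamma(p)$ and $p$ is not one of the exceptional prefixes of \Cref{ups}. As $\u$ is $\Theta$-rich, the longest $\Theta$-palindromic suffix $s$ of $p$ is therefore unioccurrent in $p$, and in particular $s$ is nonempty (the empty word is not unioccurrent). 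I then rule out $|s|<|w|$: such an $s$ would be a proper suffix of $w$ and hence occur as a suffix of both the block at $i$ and the block at $j$, i.e. at least twice in $p$, contradicting unioccurrence. Thus $|s|\geq|w|$; and $|s|=|w|$ is impossible, for it would force $w=s=\Theta(s)=\Theta(w)$. So $|s|>|w|$, and since $s=\Theta(s)$ ends with $w$ it begins with $\Theta(w)$, giving an occurrence of $\Theta(w)$ at $k:=j+|w|-|s|$, with $0\leq k<j$ and $k\neq i$ (position $i$ carries $w\neq\Theta(w)$).

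Now I localise $k$. If $i<k<j$, this occurrence of $\Theta(w)$ lies in $(i,j)$ and contradicts the bad pair directly. Otherwise $k<i$, so $R$ is a proper suffix of $s$ and, as $s=\Theta(s)$, the $\Theta$-image $\Theta(R)$ is a prefix of $s$, occupying positions $k,\dots,k+|R|-1$. Reading $\Theta(R)$ as the mirror of $R$: since $R$ contains $w$ only at its two ends and contains no $\Theta(w)$, the word $\Theta(R)$ contains $\Theta(w)$ only at its two ends, namely at positions $k$ and $k'=k+(j-i)$, and contains no $w$ at all. Hence $(k,k')$ is a bad pair for $\Theta(w)$ whose left endpoint $k$ is strictly smaller than $i$, contradicting minimality. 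In every case we reach a contradiction, so no bad pair exists and the occurrences of $w$ and $\Theta(w)$ alternate.

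The step I expect to be the crux is this last descent. A single application of \Cref{ups} only guarantees an occurrence of $\Theta(w)$ somewhere to the \emph{left} of the later $w$, not necessarily strictly between the two occurrences of $w$; the essential trick is to convert such an ``escape to the left'' into a genuinely smaller bad pair for the companion word $\Theta(w)$, so that minimality (left endpoints being nonnegative integers) terminates the process. The point requiring care is the bookkeeping that $\Theta(R)$ inherits exactly the property ``no interior occurrence of $w$ or of $\Theta(w)$'' from $R$; a secondary, easier point already handled above is checking that the chosen prefix $p$ is non-exceptional so that \Cref{ups} yields a nonempty, unioccurrent $\Theta$-palindromic suffix.
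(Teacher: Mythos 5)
Your proof is correct, and it relies on the same key lemma as the paper (\Cref{ups}), but the way you deploy it is genuinely different. The paper's proof is essentially two lines: given a factor $v$ with prefix and suffix $w$ and no occurrence of $\Theta(v)$'s counterpart $\Theta(w)$, it applies \Cref{ups} \emph{to the factor $v$ itself} --- legitimate because $\Theta$-richness of $\u$ is hereditary, so $v$ is a $\Theta$-rich finite word and is a (non-exceptional) prefix of itself. Inside $v$, any $\Theta$-palindromic suffix of length $|w|$ would force $w=\Theta(w)$, any of length greater than $|w|$ would force an occurrence of $\Theta(w)$ inside $v$, and any shorter one occurs in both copies of $w$; so $v$ has no unioccurrent $\Theta$-palindromic suffix, an immediate contradiction. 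You instead apply \Cref{ups} only to prefixes of the infinite word $\u$, and this is precisely why you need the extra machinery: the long $\Theta$-palindromic suffix $s$ of the prefix $p$ may extend to the left of the return word $R$, and your minimal-left-endpoint descent (converting a leftward escape into a strictly smaller bad pair for $\Theta(w)$, with the mirroring bookkeeping for $\Theta(R)$, which you carry out correctly) is what closes that loophole. The comparison: the paper's localization to the factor $v$ kills the escape case before it can arise and yields a very short proof; your argument is longer but formally uses less --- only the unioccurrence property for prefixes of $\u$, not richness of every factor --- so it shows alternation already follows from that prefix-level hypothesis.
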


\begin{proof}
Suppose there exist a factor $w \in \L$, $w \neq \Theta(w)$,
and a factor $v \in \L$, $|v| > |w|$, such that $w$ is its prefix and its suffix and there is no occurrence of $\Theta(w)$ in $v$.
Since the last letter of $v$ already occurred in $v$
and $v$ does not have a unioccurrent $\Theta$-palindromic suffix,
according to \Cref{ups} we have a contradiction.
\end{proof}

The following proposition gives an equivalent characterization of $\Theta$-rich words.

\begin{prop} \label{rich_nece}
Let $\u$ be an infinite or finite word.
Then $\u$ is rich $\Theta$-rich
if and only if for each factor $v \neq \varepsilon$ of $\u$,
any factor of $\u$ beginning with $v$ and ending with $\Theta(v)$,
containing no other occurrences of $v$ or $\Theta(v)$, is a $\Theta$-palindrome;
and for all $a \in \A$, the occurrences of $a$ and $\Theta(a)$ in $\u$ alternate.
\end{prop}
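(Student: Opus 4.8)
The plan is to deduce both implications from \Cref{ups}, which says that $\u$ is $\Theta$-rich exactly when, for every prefix $W$, the longest $\Theta$-palindromic suffix of $W$ is unioccurrent, apart from the exceptional prefixes of the form $px$ with $\gamma(p)\neq\gamma(px)$; the clause on alternation of letters will come essentially for free from \Cref{rich_slova_alternuji}. Throughout, write $q(W)$ for the longest $\Theta$-palindromic suffix of a prefix $W$, and recall that a nonempty $q(W)$ is automatically unioccurrent outside the exceptional case, whereas the empty word is never unioccurrent.

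For the forward implication, assume $\u$ is $\Theta$-rich. The alternation of $a$ and $\Theta(a)$ when $a\neq\Theta(a)$ is \Cref{rich_slova_alternuji} applied to $w=a$, and is vacuous when $a=\Theta(a)$. For the main condition, fix $r$ beginning with $v$, ending with $\Theta(v)$ and with no other occurrence of $v$ or $\Theta(v)$; discarding the trivial case $r=v=\Theta(v)$, assume $|r|>|v|$, and let $W$ be the prefix of $\u$ ending with the chosen occurrence of $r$. Writing $v_0$ for the first letter of $v$, the last letter of $W$ is $\Theta(v_0)$; were $W$ exceptional, \Cref{ups} would force $v_0$ to be absent from $W$, contradicting that $v_0$ begins $r$. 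Hence $q(W)$ is nonempty and unioccurrent. I will then show $q(W)=r$. If $|q(W)|<|r|$ and $|q(W)|\geq|v|$, then $q(W)=\Theta(q(W))$ forces $v$ to be a prefix of $q(W)$, giving an occurrence of $v$ strictly inside $r$; if $|q(W)|<|v|$, then $q(W)$ is a $\Theta$-palindromic suffix of $\Theta(v)$, hence a prefix of $v$ and of $r$, producing a second occurrence of $q(W)$ in $W$. Both possibilities are excluded, so $q(W)=r$ and $r=\Theta(r)$.

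For the converse, assume the two conditions and verify the criterion of \Cref{ups} for every non-exceptional prefix $W$, with last letter $x$. First, $q(W)$ is nonempty: this is clear if $x=\Theta(x)$, and otherwise non-exceptionality means $x$ or $\Theta(x)$ already occurs in $W$. If $\Theta(x)$ occurs, alternation lets me take the factor $r'$ from the last $\Theta(x)$ to the final $x$; it begins with $\Theta(x)$, ends with $x$, has no interior occurrence of either, so the first condition (with $v=\Theta(x)$) makes $r'$ a $\Theta$-palindromic suffix. If $\Theta(x)$ does not occur, then the earlier $x$ gives two occurrences of $x$ with no intervening $\Theta(x)$, contradicting alternation. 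Next, $q(W)$ is unioccurrent: if it occurred twice, the complete return word $r$ to $q(W)$ ending at the end of $W$ would begin and end with $q(W)=\Theta(q(W))$ and contain no other occurrence, so by the first condition (with $v=q(W)$) it is a $\Theta$-palindrome strictly longer than $q(W)$ and a suffix of $W$ — contradicting maximality. Thus \Cref{ups} yields that $\u$ is $\Theta$-rich.

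The main obstacle, as in the proof of \Cref{w_plus_jedna}, is the bookkeeping forced by letters that are not $\Theta$-palindromes. One must repeatedly distinguish whether the relevant boundary letter $x$ satisfies $x=\Theta(x)$, and in the contrary case the empty longest $\Theta$-palindromic suffix no longer coincides, on its own, with the exceptional prefixes of \Cref{ups}: the word $abb$ with $\Theta(a)=b$ already lacks a $\Theta$-palindromic suffix without being exceptional. The role of the alternation clause is precisely to close this gap, both to manufacture a nonempty $\Theta$-palindromic suffix and to forbid the configurations that would otherwise break the equivalence. Checking that alternation is exactly as strong as needed — no weaker, no stronger than what \Cref{rich_slova_alternuji} provides — is the delicate point.
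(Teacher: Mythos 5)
Your converse direction is correct and is essentially the paper's argument: alternation manufactures a nonempty $\Theta$-palindromic suffix of every non-exceptional prefix, and the complete-return-word argument (the hypothesis applied with $v=q(W)$) gives unioccurrence, so \Cref{ups} applies. The forward direction, however, has a genuine gap. You work with the prefix $W$ of $\u$ ending at the chosen occurrence of $r$, and with $q(W)$, the longest $\Theta$-palindromic suffix of $W$. But $q(W)$ is a suffix of $W$, not of $r$, so your case analysis ($|q(W)|<|v|$, then $|v|\le |q(W)|<|r|$, then ``otherwise $q(W)=r$'') is not exhaustive: the case $|q(W)|>|r|$ can occur, and there the conclusion $q(W)=r$ is false. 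Concretely, let $\Theta$ be the reversal map, $\u=(ab)^{\omega}$ (rich, hence $\Theta$-rich), $v=ab$, $r=aba$; this $r$ begins with $v$, ends with $\Theta(v)=ba$, and has no other occurrences of $v$ or $\Theta(v)$. Taking the occurrence of $r$ at positions $2$--$4$ gives $W=ababa$, whose longest palindromic suffix is $ababa$, of length $5>|r|$, and $q(W)\neq r$. Choosing the first occurrence of $r$ instead does not repair the proof: excluding $|q(W)|>|r|$ in that situation seems to need the fact that $r$ is a $\Theta$-palindrome, which is exactly what is being proved. (There is also a minor slip in your middle case: when $|q(W)|=|v|$, the occurrence of $v$ you produce is not ``strictly inside'' $r$ --- it lands exactly on the suffix occurrence of $\Theta(v)$, which is permitted when $v=\Theta(v)$; that subcase must instead be killed by unioccurrence of $q(W)$ in $W$.)

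The paper's proof avoids both problems by applying the richness criterion to the finite word $r$ itself rather than to a prefix of $\u$: every factor of a $\Theta$-rich word is $\Theta$-rich, and since the $\Theta$-image of the last letter of $r$ already occurs in $r$ as its first letter, \Cref{ups} (applied to $r$) gives that the longest $\Theta$-palindromic suffix $s$ of $r$ is nonempty and unioccurrent \emph{in $r$}. Because $s$ is by construction a suffix of $r$, the bound $|s|\le |r|$ is automatic, so the trichotomy $|s|\le |v|$, $|v|<|s|<|r|$, $s=r$ is exhaustive; your two contradiction arguments (which coincide with the paper's, with unioccurrence now taken inside $r$, so the $|s|=|v|$ edge case is also covered) then force $s=r$, i.e., $r=\Theta(r)$. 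Replacing $W$ by $r$ throughout your forward direction turns your proposal into a complete proof.
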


\begin{proof}
($\Rightarrow$):
Let $v$ be a factor of $\u$.
Consider a factor $r$ containing $v$ as a prefix and $\Theta(v)$ as a suffix, with no other occurrences of $v$ or $\Theta(v)$.

If $|v| = |r|$, then $v = \Theta(v)$ and $r$ is a $\Theta$-palindrome.

Let us assume $|v| < |r|$.
Let $s$ be the longest $\Theta$-palindromic suffix of $r$.
(There is such a suffix as $r$ is $\Theta$-rich and the image by $\Theta$ of the last letter of $r$ has already occurred in $r$ as the first letter.)

If $|s| \leq |v|$ then we can write $\Theta(v) = zs$ with $z \in \A^*$.
Then $v = \Theta(s)\Theta(z) = s\Theta(z)$.
We have a second occurrence of $s$ in $r$ -- a contradiction with its unioccurrence.

If $|v| < |s| < |r|$ then as $\Theta(v)$ is a suffix of $s$ we can write $s = z\Theta(v) = v\Theta(z)$.
Therefore we can find a second occurrence of $v$ in $r$ -- a contradiction with choice of $r$.

The last possibility left is $s = r$ and thus $r$ is a $\Theta$-palindrome.

Alternation of $a$ and $\Theta(a)$ follows from \Cref{rich_slova_alternuji}.

($\Leftarrow$):
Suppose for contradiction that $\u$ is not $\Theta$-rich.
Then there is a prefix $p$ whose longest $\Theta$-palindromic suffix $s$ occurs at least twice in $p$
or
there is a prefix $p$ with no non-empty $\Theta$-palindromic suffix such that $p$ ends in a non-$\Theta$-palindromic letter $a$ and has at least one another occurrence of $a$ or $\Theta(a)$.

In the first case, 
as $s$ occurs in $p$ for a second time, there is a complete return word to $s$ contained in $p$ which is its suffix.
The complete return word is a $\Theta$-palindrome and is longer than $s$ -- a contradiction.

In the second case, as occurrences of $a$ and $\Theta(a)$ alternate, we can find the rightmost occurrence of $\Theta(a)$ in $p$.
The suffix starting at that occurrence is then a word beginning with $\Theta(a)$, ending with $a$ and with no other occurrences of $a$ or $\Theta(a)$.
We supposed such words were $\Theta$-palindromic.
Thus $p$ has a $\Theta$-palindromic suffix and we have a contradiction.

\end{proof}

In the last proof we did not need all the assumptions to prove the second implication.
We can formulate a weaker sufficient condition as follows.

\begin{prop} \label{rich_sufi}
Let $\u$ be an infinite or finite word satisfying
\begin{itemize}
  \item all complete return words to $\Theta$-palindromes are $\Theta$-palindromes;
  \item for any  $a \in \A$, $a \neq \Theta(a)$, occurrences of $a$ and $\Theta(a)$ alternate;
  \item factors beginning with $a \in \A$ and ending with $\Theta(a)$, with no other occurrences of $a$ or $\Theta(a)$, are $\Theta$-palindromes.
\end{itemize}
Then $\u$ is $\Theta$-rich.
\end{prop}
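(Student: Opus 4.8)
The plan is to argue by contradiction and to reuse, essentially verbatim, the second implication ($\Leftarrow$) of the proof of \Cref{rich_nece}, checking as I go that the three hypotheses listed here are exactly the three facts that implication genuinely invokes. Assuming $\u$ is not $\Theta$-rich, I would first apply \Cref{ups}: its negation produces a prefix $p$ that is not of the excluded form $p'x$ with $\gamma(p') \neq \gamma(p'x)$ and that falls into one of two cases. Either (i) the longest $\Theta$-palindromic suffix $s$ of $p$ is not unioccurrent in $p$; or (ii) $p$ has no non-empty $\Theta$-palindromic suffix, in which case the last letter $a$ of $p$ satisfies $a \neq \Theta(a)$ (otherwise $a$ alone would be a $\Theta$-palindromic suffix), and $p$ not being of the excluded form forces $a$ or $\Theta(a)$ to occur strictly earlier in $p$.

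In case (i), $s$ is a suffix of $p$, so consider its final occurrence, at the end of $p$, together with the occurrence of $s$ immediately preceding it; the factor of $p$ delimited by these two consecutive occurrences is a complete return word to $s$. Since $s = \Theta(s)$, the first hypothesis makes this complete return word a $\Theta$-palindrome; but it is strictly longer than $s$ and is a $\Theta$-palindromic suffix of $p$, contradicting the maximality of $s$. In case (ii), the alternation hypothesis lets me locate the rightmost occurrence of $\Theta(a)$ in $p$, which exists and precedes the final letter. The suffix of $p$ starting there begins with $\Theta(a)$, ends with $a = \Theta(\Theta(a))$, and contains no further occurrence of $a$ or $\Theta(a)$, so the third hypothesis, applied to the letter $\Theta(a)$, makes it a $\Theta$-palindrome and hence a non-empty $\Theta$-palindromic suffix of $p$, again a contradiction.

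The bookkeeping---matching the excluded-form condition through $\gamma$, exhibiting the complete return word, and comparing lengths---is routine and identical to \Cref{rich_nece}. The one point demanding care, and the real content of the statement, is the accounting of hypotheses: I must verify that case (i) rests solely on the complete-return-word bullet, rather than on instantiating the stronger factor condition of \Cref{rich_nece} at $v = s$, and that case (ii) uses the factor condition only at the single letter $\Theta(a)$. Once this is made explicit, the whole argument closes under the strictly weaker assumptions stated here, which is precisely the remark preceding the proposition.
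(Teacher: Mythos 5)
Your proposal is correct and is essentially the paper's own argument: the paper gives no separate proof of this proposition, but justifies it by the remark that the ($\Leftarrow$) implication of \Cref{rich_nece} only ever uses the three listed hypotheses, which is exactly the hypothesis-accounting you carry out. Both your case (i) (complete return word to the longest $\Theta$-palindromic suffix, via the first bullet) and case (ii) (rightmost occurrence of $\Theta(a)$, via alternation and the letter-level factor condition) coincide with the two cases of that proof.
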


The following lemma will be used in the proof of \Cref{pseudo_rovnost_rich}.

\begin{lem} \label{P_acka_alternuji}
Let $\u$ be an infinite word with language closed under $\Theta$.
If $\PC{1} + \PC{2} = \Delta \FC{1} + 2$
then for all letters $a$, $a \neq \Theta(a)$, occurrences of $a$ and $\Theta(a)$ alternate.
\end{lem}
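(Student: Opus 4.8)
The plan is to argue by contradiction: I assume the equality $\PC{1}+\PC{2}=\Delta\FC{1}+2$ holds but that for some letter $a$ with $a\neq\Theta(a)$ the occurrences of $a$ and $\Theta(a)$ do not alternate, and I derive a contradiction. Failure of alternation means that two consecutive occurrences of letters from $\{a,\Theta(a)\}$ carry the same letter; by closure under $\Theta$ I may assume this letter is $a$, so that $\u$ has a factor $awa$ in which $w$ contains no occurrence of $a$ or $\Theta(a)$. The whole argument will be run on the undirected ``super Rauzy graph of order~$1$'', namely the graph $G$ built in the proof of \Cref{ref_req_explain}, whose vertices are the pairs $\{b,\Theta(b)\}$ with $b\in\L[1]$ and whose edges are the pairs $\{e,\Theta(e)\}$ with $e\in\L[2]$, loops included.

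First I would extract the structural meaning of the equality for $G$. Counting $\theta$-orbits shows that $G$ has $(\FC{1}+\PC{1})/2$ vertices and $(\FC{2}+\PC{2})/2$ edges, and $G$ is connected because $\Gamma_1(\u)$ is strongly connected. Writing $\mu(G)=|E(G)|-|V(G)|+1$ for its cyclomatic number, a direct substitution using $\Delta\FC{1}=\PC{1}+\PC{2}-2$ yields $\mu(G)=\PC{2}$. On the other hand every $\Theta$-palindrome of length~$2$ is a factor $b\Theta(b)$ and contributes one loop of $G$, so $G$ has at least $\PC{2}$ loops, with equality only if no factor $bb$ with $b\neq\Theta(b)$ occurs; moreover removing the loops keeps $G$ connected, so the loopless graph $T$ satisfies $\mu(T)\geq 0$. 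Since $\mu(G)$ equals the number of loops plus $\mu(T)$, the value $\mu(G)=\PC{2}$ forces simultaneously that \emph{the only loops of $G$ are the $\Theta$-palindromic ones $b\Theta(b)$} and that \emph{$T$ is a tree}.

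With this in hand I would finish as follows. If $w=\varepsilon$ then $aa\in\L$ produces a non-$\Theta$-palindromic loop of $G$, contradicting the first structural fact. If $w=w_1\cdots w_k$ with $k\geq 1$, then the path $a\to w_1\to\cdots\to w_k\to a$ in $\Gamma_1(\u)$ projects to a closed walk $v_0\to v_1\to\cdots\to v_k\to v_0$ in $G$, where $v_0=\{a,\Theta(a)\}$ and $v_i=\{w_i,\Theta(w_i)\}$; since no $w_i$ lies in $\{a,\Theta(a)\}$, each intermediate vertex $v_i$ differs from $v_0$. The first and last edges of this walk are not loops, hence are edges of the tree $T$. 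Because the walk avoids $v_0$ in between, all $v_i$ with $1\le i\le k$ lie in a single branch of $T$ at $v_0$, so the walk must leave and re-enter that branch through the \emph{unique} tree edge joining it to $v_0$; thus its first edge $\{aw_1,\Theta(w_1)\Theta(a)\}$ and its last edge $\{w_k a,\Theta(a)\Theta(w_k)\}$ coincide. The crux is that these two edges are in fact distinct: $aw_1=w_k a$ would force $a=w_k$, and $aw_1=\Theta(a)\Theta(w_k)$ would force $a=\Theta(a)$, both impossible. This contradiction completes the proof.

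I expect the main obstacle to be precisely this last step for a general, possibly long, gap word $w$: the naive guess is that only the factor $aa$ must be excluded, but alternation can fail across arbitrarily long blocks avoiding $\{a,\Theta(a)\}$. The cut-vertex/branch argument in the tree $T$ is what reduces the general case to comparing the first and last edges of the induced closed walk, after which the ``first letter'' and ``$\Theta$-fixedness'' obstructions rule them equal. The only routine points left are the orbit-counting for $|V(G)|$ and $|E(G)|$ and the bookkeeping identity $\mu(G)=\#\{\text{loops}\}+\mu(T)$.
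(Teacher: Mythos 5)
Your proof is correct, and it takes a genuinely different route from the paper's. The paper proves this lemma on the super \emph{reduced} Rauzy graph $\Gamma_1''$: it invokes \Cref{pseudorovnost} (the equality at $n=1$ holds iff $\Gamma_1''$ minus loops is a tree and every loop is a $\theta$-invariant simple path) and then splits into cases according to whether $a$ is special or not, tracing $1$-simple paths through the reduced graph. You instead work on the unreduced quotient graph $G$ (the ``super Rauzy graph of order $1$'' from the proof of \Cref{ref_req_explain}) and bypass \Cref{pseudorovnost} entirely: the orbit counts $\#V(G)=(\FC{1}+\PC{1})/2$ and $\#E(G)=(\FC{2}+\PC{2})/2$, combined with the hypothesis, give $\mu(G)=\PC{2}$, and since the $\PC{2}$ factors of the form $b\Theta(b)$ already account for that many loops, you obtain simultaneously that the loopless graph $T$ is a tree and that no loop $bb$ with $b\neq\Theta(b)$ exists. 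Your endgame is also uniform: instead of the paper's special/non-special dichotomy, the cut-vertex (branch) argument in $T$ forces the first and last edges of the closed walk induced by a hypothetical factor $awa$ to coincide, which the first-letter comparison then excludes; the degenerate case $w=\varepsilon$ is exactly the excluded loop $aa$. What the paper's route buys is economy: the structural facts are prepackaged in \Cref{pseudorovnost}, so the proof is short modulo that corollary, and the same corollary is reused for all $n$ in the proof of \Cref{pseudo_rovnost_rich}. What your route buys is self-containedness and transparency: an Euler-characteristic count replaces the reduced-graph and simple-path machinery, all cases are handled by one mechanism, and you get the explicit by-product that under the hypothesis no square $bb$ of a non-$\Theta$-palindromic letter occurs. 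Both arguments ultimately rest on the same two structural facts (tree after deleting loops; loops are $\Theta$-palindromic), established at different levels of reduction.
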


\begin{proof}
Let $a \in \A$ and suppose $a$ is not a $\Theta$-palindrome.
We will look at the super reduced Rauzy graph $\Gamma_1''$ for which the statements in \Cref{pseudorovnost} hold.

Suppose $a$ is not special.
Then $a$ is a factor of exactly one $1$-simple path $p$.
If the edge $\left \{ p, \Theta(p) \right \}$ is a loop in $\Gamma_1''$ then the occurrences of $a$ and $\Theta(a)$ must alternate as $p = \Theta(p)$.
If $\left \{ p, \Theta(p) \right \}$ is not a loop then there is no path in $\Gamma_1''$ that would contain $2$ occurrences of $p$ and no occurrence of $\Theta(p)$.
Such path would form a cycle in $\Gamma_1''$ which is not possible.

Suppose now that $a$ is special.
Then $\left \{ a, \Theta(a) \right \}$ is a vertex of $\Gamma_1''$.
We will look at paths going from $\left \{ a, \Theta(a) \right \}$ again to $\left \{ a, \Theta(a) \right \}$
 which are not going through the vertex $\left \{ a, \Theta(a) \right \}$.
There are $2$ possibilities.

In the first case such path corresponds to a loop in $\Gamma_1''$ connected to $\left \{ a, \Theta(a) \right \}$.
Therefore, the corresponding path in $\Gamma_1$ is a $1$-simple path and is a $\Theta$-palindrome beginning with $a$ or $\Theta(a)$ respectively and ending with $\Theta(a)$ or $a$ respectively.

In the second case such path  starts with an edge $\left \{ e,\Theta(e) \right \}$.
It is clear that the end of this path is again the same edge $\left \{ e,\Theta(e) \right \}$
 as otherwise there would be a cycle in $\Gamma_1''$.
 If the corresponding path in $\Gamma_1$ starts with $e$, it must end with $\Theta(e)$ as otherwise there would be a multiple edge (forming a cycle) between the vertices of $\Gamma''$ connected by $\left \{ e,\Theta(e) \right \}$.
Therefore, if such path starts with $a$, it must end with $\Theta(a)$ and the proof is finished.

\end{proof}

In what follows, to ease the notation, we will borrow some elements from the free group generated by $\A$ and use them in the following way.
Let $w$ be a finite word.
By $w^{-1}$ we denote an element whose concatenation with $w$ produces the empty word, i.e., $ww^{-1} = w^{-1}w = \varepsilon$.
The notation is justified by the fact that we will never concatenate $w^{-1}$ with anything else than $w$.

\begin{proof}[Proof of \Cref{pseudo_rovnost_rich}]

~

\vspace{\baselineskip}

($\Leftarrow$):

Suppose that $\u$ is not rich.
According to \Cref{P_acka_alternuji} non-$\Theta$-palindromic letters alternate.
\Cref{rich_nece} implies there exists a factor $v \in \L$ and a factor $r \in \L$
such that $r$ begins with $v$ and ends with $\Theta(v)$,
with no other occurrences of $v$ or $\Theta(v)$, and $r$ is not a $\Theta$-palindrome.
Then $r$ is of the form
$$
r = vwxsy\Theta(w)\Theta(v),
$$
where $w$ and $s$ are finite words and $x,y \in \A$, $x \neq \Theta(y)$.
As the language is closed under $\Theta$, we can see that $vw\Theta(y) \in \L$.
This means that the factor $vw$ is right special and therefore $\left \{ vw, \Theta(vw) \right \}$ is a vertex in $\Gamma_{|vw|}''$.

If $r$ is a $|vw|$-simple path then $r$ forms a loop in $\Gamma_{|vw|}''$.
Therefore according to \Cref{pseudorovnost} it is a $\Theta$-palindrome -- a contradiction.

Suppose now that $r$ is not a $|vw|$-simple path, i.e., contains other special factors of length $|vw|$.
Denote by $p,q$ two factors of $r$ such that they are $|vw|$-simple paths and $p$ is a prefix and $q$ a suffix of $r$.
Let $p'$ denote the special suffix of length $|vw|$ of $p$ and $q'$ the special prefix of $q$ of length $|vw|$.
It is clear that vertices $\left \{ vw, \Theta(vw) \right \}$ and $\left \{ p', \Theta(p') \right \}$ are connected
as well as $\left \{ vw, \Theta(vw) \right \}$ and $\left \{ q', \Theta(q') \right \}$ are connected.

If $p'$ and $q'$ coincide, i.e., $r = p{p'}^{-1}q$, then as $x \neq \Theta(y)$ we have a multiple edge in $\Gamma_{|vw|}''$ -- a cycle in $\Gamma_{|vw|}''$ and thus a contradiction.
If $p'$ and $q'$ do not coincide there exists a path in $\Gamma_{|vw|}''$ leading from $\left \{ p', \Theta(p') \right \}$ to $\left \{ q', \Theta(q') \right \}$,
 avoiding $\left \{ vw, \Theta(vw) \right \}$,
 which together with the two paths connecting $\left \{ vw, \Theta(vw) \right \}$ forms a cycle and we have again a contradiction with \Cref{pseudorovnost}.

\vspace{\baselineskip}

($\Rightarrow$):

We say that a factor $r$ is a \textit{realization} of an edge in $\Gamma''$ between vertices $\left \{ v, \Theta(v) \right \}$ and $\left \{ w, \Theta(w) \right \}$
if
\begin{itemize}
\item  $r$ has a prefix $v$ or $\Theta(v)$ and a suffix $w$ or $\Theta(w)$ OR
\item  $r$ has a prefix $w$ or $\Theta(w)$ and a suffix $v$ or $\Theta(v)$,
\end{itemize}
and contains no interior occurrences of factors $v$, $\Theta(v)$, $w$ and $\Theta(w)$.

We will denote by $v \! \rightharpoondown \! w$ a realization of an edge between vertices $\left \{ v, \Theta(v) \right \}$ and $\left \{ w, \Theta(w) \right \}$
which starts with $v$ and ends with $w$.

First, we are going to show that for every pair of distinct vertices $\left \{ v, \Theta(v) \right \}$ and $\left \{ w, \Theta(w) \right \}$
there are exactly two realizations of an edge between them and that they are $\Theta$ images of each other.
Let $w \neq v$ and $w \neq \Theta(v)$.
Suppose without loss of generality there is a realization $v \! \rightharpoondown \! w$.
We will find two consecutive realizations, denoted $r_1$ and $r_2$. See \Cref{fig:duk_o1}.

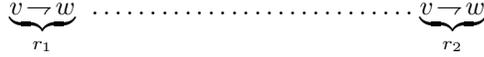
\begin{figure}[ht]
\begin{center}
\begin{picture}(200,15)

\put(20,3){\makebox[180pt]{$\underbrace{v \! \rightharpoondown \! w}_{r_1}$ \dotfill $\underbrace{v \! \rightharpoondown \! w}_{r_2}$}}

\end{picture}
\end{center}
\caption{Two consecutive realizations $v \! \rightharpoondown \! w$.}
\label{fig:duk_o1}
\end{figure}

According to \Cref{rich_slova_alternuji}
occurrences of each factor alternate with occurrences of its $\Theta$ image.
Therefore we can find occurrences of $\Theta(v)$ and $\Theta(w)$ between occurrences of $r_1$ and $r_2$.
Let us first find the leftmost occurrence (after $r_1$) of $\Theta(v)$.
As every factor beginning with $v$ and ending with $\Theta(v)$ is a $\Theta$-palindrome, we can find a realization $\Theta(w) \! \rightharpoondown \! \Theta(v)$.
Denote this realization by $q_1$.
See \Cref{fig:duk_o2}.
Note that the occurrences of the factor $w$ in $r_1$ and $\Theta(w)$ in $q_1$ may coincide.

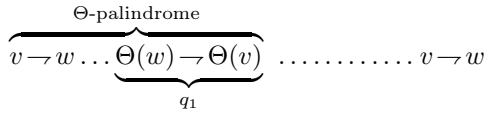
\begin{figure}[ht]
\begin{center}
\begin{picture}(200,15)

\put(20,3){\makebox[180pt]{$\overbrace{v \! \rightharpoondown \! w \dots \underbrace{\Theta(w) \! \rightharpoondown \! \Theta(v)}_{q_1}}^{\Theta\text{-palindrome}}$ \dotfill $v \! \rightharpoondown \! w$}}

\end{picture}
\end{center}
\caption{The leftmost occurrence of realization $\Theta(w) \! \rightharpoondown \! \Theta(v)$.}
\label{fig:duk_o2}
\end{figure}

Let us now find the rightmost occurrence (before $r_2$) of $\Theta(w)$.
Analogously, we find a realization $\Theta(w) \! \rightharpoondown \! \Theta(v)$.
Denote this realization by $q_2$.
See \Cref{fig:duk_o3}.

\begin{figure}[ht]
\begin{center}
\begin{picture}(200,15)

\put(20,3){\makebox[180pt]{$v \! \rightharpoondown \! w$ \dotfill $\overbrace{\underbrace{\Theta(w) \! \rightharpoondown \! \Theta(v)}_{q_2} \ldots v \! \rightharpoondown \! w}^{\Theta\text{-palindrome}}$}}

\end{picture}
\end{center}
\caption{The rightmost occurrence of realization $\Theta(w) \! \rightharpoondown \! \Theta(v)$.}
\label{fig:duk_o3}
\end{figure}
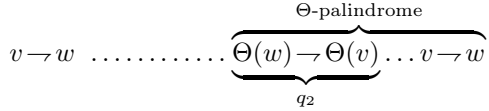

Suppose now that occurrences of $q_1$ and $q_2$ are not equal.
If they are not consecutive, we can find a realization of $\Theta(w) \! \rightharpoondown \! \Theta(v)$, denoted by $q_1'$, such that $q_1'$ and $q_2$ are consecutive.
Now, using the same arguments as for consecutive realizations $r_1$ and $r_2$, we can find a realization $v \! \rightharpoondown \! w$ between $q_1'$ and $q_2$.
This is a contradiction with $r_1$ and $r_2$ being consecutive.
Thus the occurrence of $q_1$ equals the occurrence $q_2$ and $q_1 = q_2$.
$\Theta$-palindromes depicted in \Cref{fig:duk_o2} and \Cref{fig:duk_o3} imply $q_1 = \Theta(r_1)$ and $q_2 = \Theta(r_2)$, respectively.
Therefore $r_1 = r_2$ and we deduce that every realization $v \! \rightharpoondown \! w$ equals $r_1$
and every realization $\Theta(v) \! \rightharpoondown \! \Theta(w)$ equals $\Theta(r_1)$.

So far we have shown that for every pair of distinct vertices $\left \{ v, \Theta(v) \right \}$ and $\left \{ w, \Theta(w) \right \}$
there are exactly two realizations of an edge between them and that they are $\Theta$ images of each other.
This implies that $\Gamma''$ after removing loops is a tree.
\Cref{rich_slova_alternuji} claims that occurrences of $w$ and $\Theta(w)$ alternate,
 thus any realization between the same vertices $\{w, \Theta(w)\}$ is of the form $w \! \rightharpoondown \! \Theta(w)$ or $\Theta(w) \! \rightharpoondown \! w$. By \Cref{rich_nece}, such realizations are $\Theta$-palindromes.
These realizations are in fact loops in $\Gamma''$.
Using \Cref{pseudorovnost} these two parts together imply
that the equality \eqref{eq:pseudorovnost} holds.

\end{proof}

\section{Examples of $\Theta$-rich words}\label{sec:rich_examples}

In this section we will provide examples of $\Theta$-rich words.
We first concentrate on a~generalization of episturmian words, namely $\Theta$-episturmian words,  mentioned earlier.
The following definition will serve to reveal the structure of the language of a $\Theta$-episturmian word.
Its main aim is to ease the notation.

\begin{defi}
Let $\u$ be a $\Theta$-episturmian word.
Denote by $\left( w_k \right)_{k=0}^{K}$ the sequence of all bispecial factors of $\u$,
 ordered by increasing length and starting by $w_0 = \varepsilon$ with $K$ possibly equal to $+\infty$.
Furthermore let $z_k$ denote the unique letter such that $w_kz_k$ is a prefix of $w_{k+1}$.
Denote by $p_k \in \A^*$ the word such that $w_{k+1} = p_k\Theta(z_k)w_k$.
For all $k \in \N$ and all $a \in \A$ let $r_{k,a}$ denote the complete return word to $w_k$ beginning with $w_ka$, if it exists.

\vspace{\baselineskip}

For a letter $a \neq \Theta(a)$, let $k_a$ denote the maximal index $k \leq K$, if it exists, such that
$w_k$ has non-$\Theta$-palindromic complete return words beginning with $w_ka$ and with $w_k\Theta(a)$.
If $k_a$ does not exist, we will consider it to be $+\infty$.
\end{defi}

Note that since there is at most one complete return word $w_k$ having a prefix $w_ka$, the definition of $r_{k,a}$ is correct.
As the language of $\u$ is closed under $\Theta$, $r_{k,a}$ exists if and only if $r_{k,\Theta(a)}$ exists.
Note also that $k_a \geq 1$ for every letter $a \neq \Theta(a)$.

\begin{prop}\label{prop:struktura_dle_ka}
Let $\u$ be a $\Theta$-episturmian word.
\begin{enumerate}
  \item \label{crw1} If $a = \Theta(a)$ and $r_{k,a}$ exists, then $r_{k,a}$ is a $\Theta$-palindrome;
  \item \label{crw2} if $a \neq \Theta(a)$, then  \begin{enumerate}[a.]
    \item \label{crw2a} if $k \leq k_a$, then $r_{k,a}$ equals $w_{k}aw_{k}$, furthermore $a$ is not contained in $w_k$ or in any other complete return word to $w_k$ except for $r_{k,a}$;
    \item \label{crw2b} if $k > k_a$, then at most one of the words $r_{k,a}$ and $r_{k,\Theta(a)}$ exist.
  \end{enumerate}
\end{enumerate}

\end{prop}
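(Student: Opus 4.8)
The plan is to reduce everything to the structure of the bispecial factors and their complete return words, and then to run a single induction on $k$. Before touching return words I would establish the foundational fact that \emph{every bispecial factor $w_k$ is a $\Theta$-palindrome}. Indeed, if $w$ is left special then, as $\L$ is closed under $\Theta$, the image $\Theta(w)$ is right special (the two left extensions $aw,bw$ give the two right extensions $\Theta(a)\Theta(w),\Theta(b)\Theta(w)$), and symmetrically $\Theta$ sends right special factors to left special ones. Hence a bispecial $w_k$ yields two left special factors $w_k$ and $\Theta(w_k)$ of the same length; since a $\Theta$-episturmian word has at most one left special factor of each length, $w_k=\Theta(w_k)$. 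The same uniqueness shows that the left special factors are linearly ordered by the prefix relation (the length-$n$ prefix of a left special factor of length $n+1$ is again left special), so they converge to a single characteristic word $\mathbf c$, and the factors $w_k$ are exactly the $\Theta$-palindromic prefixes of $\mathbf c$, with $w_{k+1}$ the shortest $\Theta$-palindrome having $w_kz_k$ as a prefix; this is consistent with $w_{k+1}=p_k\Theta(z_k)w_k$.

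Two consequences of $w_k=\Theta(w_k)$ will be used throughout. First, since $\L$ is closed under $\Theta$, the map $\Theta$ permutes the complete return words to $w_k$ (it exchanges the prefix and suffix occurrences of $w_k$ and reverses the interior). Second, because $w_{k+1}$ begins and ends with $w_k$, every complete return word to $w_{k+1}$ is a concatenation of complete return words to $w_k$, obtained by desubstituting along the occurrences of $w_k$. Using these, I would describe by induction on $k$ the exact shape of the complete return word to $w_k$ beginning with $w_kb$, for each letter $b$. The base case $w_0=\varepsilon$ is immediate, as $r_{0,b}=b$, and the inductive step reads a complete return word to $w_{k+1}$ as the product of complete return words to $w_k$ forced by the passage $w_k\mapsto w_{k+1}$; the rigidity of $\Theta$-episturmian words (a non-special factor extends uniquely) determines this product once the first return letter is fixed.

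Granting this description, claim~(2a) follows: as long as $k\le k_a$ the letter $a$ has not yet been introduced by the directive sequence $(z_j)$, so $a$ occurs only flanked by copies of $w_k$; concretely $w_kaw_k\in\L$ and no other complete return word to $w_k$ contains $a$, whence $r_{k,a}=w_kaw_k$ together with the asserted uniqueness of occurrences of $a$. Claim~(1) is then obtained from the $\Theta$-symmetry: for $a=\Theta(a)$ the word $\Theta(r_{k,a})$ is again a complete return word to $w_k$, and the inductive description pins its first letter after the prefix $w_k$ to be $\Theta(a)=a$; since the complete return word beginning with $w_ka$ is unique (as noted after the definition), $\Theta(r_{k,a})=r_{k,a}$, i.e.\ $r_{k,a}$ is a $\Theta$-palindrome.

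The main obstacle is claim~(2b). Here I must show that once $k>k_a$ the factor $w_k$ cannot be right-extended by both $a$ and $\Theta(a)$. The guiding idea is that $k_a$ is exactly the last level before the $\Theta$-palindromic closure merges the two contexts of $a$ and $\Theta(a)$: at level $k_a+1$ the factor $w_{k_a+1}=p_{k_a}\Theta(z_{k_a})w_{k_a}$ begins to contain $a$ (equivalently $\Theta(a)$), after which any surviving complete return word beginning with $w_ka$ is strictly longer than $w_kaw_k$. I would then argue that if both $r_{k,a}$ and $r_{k,\Theta(a)}$ existed for some $k>k_a$, both would again be non-$\Theta$-palindromic — for a $\Theta$-palindromic return word beginning with $w_ka$ would, through the $\Theta$-symmetry and the uniqueness of left special factors, collapse the two distinct right extensions of $w_k$ into one — contradicting the maximality of $k_a$. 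Making precise the step that the merging at level $k_a+1$ forbids the coexistence of the two extensions, by tracking how $\Theta$-palindromic closure alters the right extensions of $w_k$ while respecting the at-most-one-left-special constraint, is the delicate point and is exactly where the explicit return-word description from the induction is needed.
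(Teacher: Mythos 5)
Your preliminary reductions are correct and match what the paper uses implicitly: bispecial factors of a $\Theta$-episturmian word are $\Theta$-palindromes (this is why the decomposition $w_{k+1}=p_k\Theta(z_k)w_k$ makes sense at all), $\Theta$ permutes the complete return words to $w_k$, and complete return words to $w_{k+1}$ decompose along occurrences of $w_k$. The induction-on-$k$ skeleton is also the paper's. The genuine gap is that every substantive claim is conditioned on an ``exact shape of the complete return word to $w_k$ beginning with $w_kb$'' which you never state, let alone prove; that description \emph{is} the proposition, so the proposal as written is circular. Concretely: for claim (2a), your assertion that $k\le k_a$ means ``$a$ has not yet been introduced by the directive sequence $(z_j)$'' is exactly what has to be shown, since $k_a$ is defined via the existence of non-$\Theta$-palindromic complete return words, not via the $z_j$; the paper proves (2a) directly by supposing $a$ occurs in $w_{k+1}$ and producing, through the dichotomy $z_k=a$ versus $z_k\neq a$, either a $\Theta$-palindromic $r_{k+1,a}$ (contradicting $k+1\le k_a$) or two distinct special factors of the same length (contradicting $\Theta$-episturmianity). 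For claim (1), your argument needs $\Theta(r_{k,a})$ to begin with $w_ka$, i.e.\ that $r_{k,a}$ ends with $aw_k$; this is precisely the nontrivial point, and the paper obtains it inductively by showing that any letter $b$ with $aw_kb\in\L$ must equal $z_k$ --- otherwise $w_ka$ and $w_kz_k$ would be two left special factors of the same length --- whence $r_{k+1,a}=p_k\Theta(z_k)r_{k,a}z_k\Theta(p_k)$ inherits $\Theta$-palindromicity from $r_{k,a}$.

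For claim (2b) you explicitly leave the key step open (``the delicate point''), so there is nothing to assess, and the hint you give --- that a $\Theta$-palindromic return word would ``collapse the two right extensions via $\Theta$-symmetry'' --- does not point at the actual mechanism. What the paper does at the critical level $k+1=k_a+1$ is: if both $r_{k+1,a}$ and $r_{k+1,\Theta(a)}$ exist, then by maximality of $k_a$ one of them is a $\Theta$-palindrome; uniqueness of special factors then forces $z_k=a$, after which $w_ka$ admits only the extension $w_kaw_k$, so that $\u$ eventually equals $w_kaw_kaw_ka\cdots$ and $\Theta(a)$ occurs only finitely many times --- contradicting recurrence, which holds because the language is closed under $\Theta$. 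Nothing in your sketch anticipates this recurrence argument (it is not a special-factor-counting contradiction at all), and the levels $k+1>k_a+1$ additionally need the observation that nonexistence of $r_{k,a}$ propagates to $r_{k+1,a}$. In short: right preliminary lemmas, right induction scheme, but all three inductive verifications --- the actual content of the proposition --- are missing.
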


\begin{proof}
The proof will be done by induction on $k$.

For $k=0$, we have $w_0 = \varepsilon$ and the claim holds trivially (for non-$\Theta$-palindromic $a$ we are in the case (\ref{crw2a})).

Suppose the claim holds for $k$.
We will prove it for $k+1$.
We need to consider especially the case $k+1 = k_a+1$ for all $a$ to be able to assume also (\ref{crw2b}).

\vspace{\baselineskip}
Case (\ref{crw1}):

Let $a = \Theta(a)$.
If $a = z_k$ then $r_{k+1,a} = p_k\Theta(z_k)w_kas$ for some word $s$.
As there is at most one complete return word to $w_k$ beginning with $w_ka$,
we have $w_kas = w_ka\Theta(p_k)$.
Thus, $r_{k+1,a} = p_kaw_ka\Theta(p_k)$ which is a $\Theta$-palindrome.

If $a \neq z_k$  suppose that $r_{k+1,a}$ is not a $\Theta$-palindrome.
Therefore we can write $r_{k+1,a} = p_k\Theta(z_k)w_kax$ and $w_kax = yaw_kv = r_{k,a}v$ with $x,y,v \in \A^*$ .
By the assumption $r_{k,a}$ is a $\Theta$-palindrome which implies that $v$ cannot start with $z_k$ as $r_{k+1,a}$ would be a $\Theta$-palindrome.
In other words we can find a factor having the form $aw_kb$ with $b \neq z_k$.

We can then find two different factors $\Theta(z_k)w_ka \in \L$ and $\Theta(b)w_ka \in \L$.
Thus $w_ka$ is left special which is a contradiction with $w_kz_k$ being a second left special factor of the same length.
Thus $b = z_k$ and we have $r_{k+1,a} = p_k\Theta(z_k)r_{k,a}z_k\Theta(p_k)$ which is a $\Theta$-palindrome.
This concludes the proof of the case (\ref{crw1}).

\vspace{\baselineskip}
Case (\ref{crw2}):

Let now $a \neq \Theta(a)$. We distinguish the following three cases according to the value of $k_a$.
\begin{enumerate}[I.]
  \item $k+1 \leq k_a$

Suppose that $a$ is contained in $w_{k+1}$.
This means that $w_kaw_k$ is a factor of $w_{k+1}$ and so is $w_k\Theta(a)w_k$.

Suppose $z_k = a$.
We have $w_{k+1} = w_kaw_kpw_k\Theta(a)w_k$ where $p$ is a $\Theta$-palindrome or ${w_k}^{-1}$.
Then the complete return word $r_{k+1,a}$ equals $w_{k+1}aw_kpw_k\Theta(a)w_k$ which is a $\Theta$-palindrome and contradicts the assumption $k+1 < k_a$.


Suppose now $z_k \neq a$. 
We can then find a factor of $w_{k+1}$ of the form $bw_kaw_kc$ with $b \neq \Theta(z_k)$.
(If $b = \Theta(z_k)$, then we would have a shorter bispecial factor than $w_{k+1}$ but longer than $w_k$.)
Since $r_{k+1,a}$ exists we can find a factor $\Theta(z_k)w_ka$.
Thus $w_ka$ is right special which contradicts $w_kz_k$ being right special of the same length.

We will now prove that $r_{k+1,a}$ is the only complete return word to $w_{k+1}$ to contain $a$.
Suppose that $a$ is contained in $r_{k+1,b}$ with $b \neq a$.
Since $a$ is not contained in $w_{k+1}$,  $r_{k+1,b}$ is of the form $w_{k+1}xw_{k+1}$ where $x \in \A^*$ contains $a$.

We can find a factor of $x$ equal to $cw_ka$ with $c \neq \Theta(z_k)$.
As $r_{k+1,a}$ exists there is a factor $\Theta(z_k)w_ka$ which implies that $w_ka$ is right special.
Since especially $a \neq z_k$ as otherwise it would be contained in $w_{k+1}$, we arrive at a contradiction.

It remains to show that $r_{k+1,a} = w_{k+1}aw_{k+1}$.
Since $k+1 < k_a$ we know from the definition of $k_a$ that both $r_{k+1,a}$ and $r_{k+1,\Theta(a)}$ exist.
We can see that $r_{k+1,a} = p_k\Theta(z_k)r_{k,a}x$ with $x \in \A^*$.
If the first letter of $x$ is $z_k$ we are finished.
Suppose it is not true and denote that letter by $x_0 \neq z_k$.
We have $aw_kx_0 \in \L$.
As $r_{k+1,\Theta(a)}$ exists (from the definition of $k_a$) we know that $\Theta(z_k)w_k\Theta(a) \in \L$ which implies $aw_kz_k \in \L$.
This implies that $aw_k$ is right special.
As $\Theta(z_k) \neq a$ we can see that we have $2$ right special factors of the same length and thus $x_0$ must be $z_k$.
Finally as $r_{k,a} = w_kaw_k$ we have $r_{k+1,a} = p_k\Theta(z_k)r_{k,a}z_k\Theta(p_k) = w_{k+1}aw_{k+1}$.

  \item $k+1 = k_a+1$

If $w_{k+1}$ does not exist, then $r_{k+1,a}$ does not exist too.

If $w_{k+1}$ exists, suppose that both $r_{k+1,a}$ and $r_{k+1,\Theta(a)}$ exist and at least one of them is a $\Theta$-palindrome.
We can suppose without loss of generality that $r_{k,a}$ is a $\Theta$-palindrome.
We can then find a factor $\Theta(a)w_kz_k$.
$r_{k+1,\Theta(a)}$ contains a factor $\Theta(a)w_kb$ with $b \neq z_k$ as otherwise we would have two complete return words $r_{k+1,a}$.
The factor $\Theta(a)w_k$ is therefore right special which implies $z_k = a$.
As $\Theta(a)w_k\Theta(a) \in \L$ we have also $aw_ka \in \L$ and $aw_k$ is not right special.
Since $w_ka$ can only be extended as $w_kaw_k$ we arrive to a contradiction --
 $\u = pw_kaw_kaw_ka\ldots$ which implies $\u$ is not recurrent ($\Theta(a)$ occurs only in the word $p$).

  \item $k+1 > k_a+1$

It suffices to see that
if $r_{k,a}$ does not exist then $r_{k+1,a}$ does not exist too.

\end{enumerate}

\end{proof}

\begin{coro}\label{ka_exists}
For all letters $a \neq \Theta(a)$ the value of $k_a$ is finite.
\end{coro}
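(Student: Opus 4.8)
The plan is to prove \Cref{ka_exists} by contradiction, assuming that for some non-$\Theta$-palindromic letter $a$ we have $k_a = +\infty$, and deriving that the word $\u$ must be eventually periodic (or violate recurrence), which is impossible for a genuine $\Theta$-episturmian word with infinitely many bispecial factors. The key leverage comes from case (\ref{crw2a}) of \Cref{prop:struktura_dle_ka}: if $k_a = +\infty$, then for \emph{every} index $k \le K$ we are in the regime $k \le k_a$, so $r_{k,a} = w_k a w_k$ and, crucially, the letter $a$ appears in no complete return word to $w_k$ other than this one, and not inside $w_k$ itself.

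First I would extract the structural consequence of $k \le k_a$ holding for all $k$. Since $a$ is never contained in $w_k$, the occurrences of $a$ in $\u$ are confined to the ``return'' positions, and each occurrence of $a$ is immediately surrounded by the bispecial factor $w_k$ on both sides (because $r_{k,a} = w_k a w_k$ is the unique complete return word containing $a$). As $k$ ranges over all indices and $|w_k| \to \infty$, this forces $a$ to be flanked by arbitrarily long copies of the $w_k$, which are themselves nested prefixes/suffixes of one another. The plan is to show this pins down the structure so tightly that either $a$ occurs only finitely often (contradicting recurrence, which holds since $\L$ is closed under $\Theta$ as established at the start of \Cref{sec:pcfc}), or the alternation forced between $a$ and $\Theta(a)$ breaks down.

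The cleanest route I would take exploits the definition of $k_a$ together with the alternation of $a$ and $\Theta(a)$. By definition $k_a$ is the maximal $k$ for which $w_k$ has non-$\Theta$-palindromic complete return words beginning with \emph{both} $w_k a$ and $w_k \Theta(a)$. If $k_a = +\infty$, then for every $k$ both $r_{k,a}$ and $r_{k,\Theta(a)}$ exist and equal $w_k a w_k$ and $w_k \Theta(a) w_k$ respectively (applying (\ref{crw2a}) to $a$ and to $\Theta(a)$). But $w_k a w_k$ and $w_k \Theta(a) w_k$ are the only complete return words to $w_k$ that contain $a$ or $\Theta(a)$, and neither $a$ nor $\Theta(a)$ occurs inside $w_k$. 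Thus between any two consecutive occurrences of $a$ (in either return word), $w_k$ occurs; letting $k \to \infty$ shows every factor of $\u$ is eventually a power of a single word, i.e.\ $\u$ is periodic. A periodic word has only finitely many bispecial factors, forcing $K < \infty$ and hence $k_a \le K < \infty$, the desired contradiction.

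The main obstacle I anticipate is making the ``forces periodicity'' step rigorous: I must argue carefully that having, for all $k$, the complete return words to $w_k$ confined to the two $\Theta$-images $w_k a w_k$ and $w_k \Theta(a) w_k$ (the only ones carrying $a$ or $\Theta(a)$) genuinely constrains \emph{all} of $\u$ and not merely the immediate neighborhoods of $a$. The subtlety is that other letters and other return words $r_{k,b}$ may exist; I would handle this by invoking the $\Theta$-episturmian property (at most one left special factor of each length) to bound the branching, and by noting that each increment from $w_k$ to $w_{k+1}$ adds the controlled block $p_k \Theta(z_k)$, so the nesting $w_{k+1} = p_k \Theta(z_k) w_k$ propagates a rigid self-similar structure. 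Assembling these observations to conclude finiteness of the bispecial sequence, and thereby $k_a < \infty$, is where the real work lies.
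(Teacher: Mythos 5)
Your reduction is the same as the paper's: assume $k_a = +\infty$, observe that then case (\ref{crw2a}) of \Cref{prop:struktura_dle_ka} applies for every $k$ (to $a$ and to $\Theta(a)$ alike), so that no bispecial factor $w_k$ contains $a$ or $\Theta(a)$, every complete return word to $w_k$ except $r_{k,a}=w_kaw_k$ and $r_{k,\Theta(a)}=w_k\Theta(a)w_k$ avoids these letters, and the periodic case is disposed of via $k_a \le K < \infty$. Up to that point you agree with the paper. The genuine gap is the step you yourself flag as ``where the real work lies'': the claim that ``letting $k \to \infty$ shows every factor of $\u$ is eventually a power of a single word, i.e.\ $\u$ is periodic'' is a non-sequitur. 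No single word and no periodic pattern is produced by the premises, no argument is offered, and periodicity is in fact the wrong mechanism here: what the premises collide with is recurrence, not aperiodicity. Since this deduction is the only nontrivial content of the corollary (the paper itself compresses it into ``which is clearly impossible''), a proposal that defers it has not proved the statement.

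Here is how the impossibility actually gets closed, and why your shortcut cannot be repaired as stated. First, $k_a = +\infty$ already forces $K = +\infty$: the set of indices defining $k_a$ is nonempty (the paper notes $k_a \ge 1$), so if it were also bounded it would have a maximum; hence $|w_k| \to \infty$. Now, since $\L$ is closed under $\Theta$, $\u$ is recurrent, so $a$ occurs infinitely often. Fix two consecutive occurrences of $a$ and let $f = ava$ be the factor they delimit, with $v$ containing no $a$. Choose $k$ with $|w_k| \ge |f|$. By recurrence, $f$ occurs arbitrarily far to the right, in particular at an occurrence lying entirely beyond the first occurrence of $w_k$ in $\u$; from that point on $\u$ is tiled by complete return words to $w_k$, so by (\ref{crw2a}) the terminal $a$ of that occurrence of $f$ is immediately preceded by an occurrence of $w_k$ --- a factor of length at least $|f|$ containing no $a$ --- which therefore overlaps the initial $a$ of $f$, a contradiction. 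Note the detour through the \emph{recurring factor} $f$, rather than through two fixed positions, is unavoidable: the flanking property holds only for occurrences of $a$ lying after the first occurrence of $w_k$, and $\u$ is merely recurrent, not uniformly recurrent, so gaps between occurrences of $a$ are not a priori bounded; this is exactly the loophole through which your ``fixed gap versus growing $|w_k|$'' intuition, and a fortiori your periodicity claim, slips. Your second paragraph does gesture at the right target ($a$ occurring only finitely often would contradict recurrence), but the route you commit to in the end never establishes it.
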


\begin{proof}
If $\u$ is periodic, then $K$ is finite and so is $k_a$.
For $\u$ aperiodic if $k_a = +\infty$ then we stay in the case (\ref{crw2a}) for all $k$.
In other words all bispecial factors do not contain $a$ which is clearly impossible.
\end{proof}

From the above corollary we can deduce that if $\Theta$ is different from the reversal mapping,
then there exists $k$ such that $\Delta \FC{|w_k|} < \# \A -1$, i.e., the growth of factor complexity is not constant.

The following corollary will serve to derive the criterion for richness of $\Theta$-episturmian words.

\begin{coro} \label{cor:zustavaji_palindromy}
Let $k > k_a$ and $r_{k,a}$ is a $\Theta$-palindrome.
Then either $r_{k+1,a}$ does not exist or is a $\Theta$-palindrome.
\end{coro}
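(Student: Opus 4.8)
The plan is to read the hypotheses through \Cref{prop:struktura_dle_ka}. Since $k>k_a$ and $r_{k,a}$ exists, part~(\ref{crw2b}) forces $r_{k,\Theta(a)}$ not to exist; as $\u$ is recurrent and $\L$ is closed under $\Theta$, this means $w_k\Theta(a)\notin\L$, and hence also $aw_k=\Theta(w_k\Theta(a))\notin\L$, using that bispecial factors are $\Theta$-palindromes (so $\Theta(w_k)=w_k$), a fact already invoked in the proof of \Cref{prop:struktura_dle_ka}. Because $w_kz_k$ is a prefix of $w_{k+1}$ and therefore occurs, this rules out $z_k=\Theta(a)$. Moreover, being a $\Theta$-palindromic complete return word beginning with $w_ka$, the word $r_{k,a}$ must end with $\Theta(a)w_k$. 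If $w_{k+1}a$ does not occur then $r_{k+1,a}$ does not exist and we are finished, so I assume it does. The core of the argument is then to locate the first reoccurrence of $w_{k+1}$ inside $r_{k+1,a}$, exploiting that $w_kz_k$ is the unique left special factor of length $|w_k|+1$ and that no bispecial factor has length strictly between $|w_k|$ and $|w_{k+1}|$; consequently every occurrence of $w_kz_k$ continues deterministically to an occurrence of $w_{k+1}$. Two cases for $z_k$ remain.

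In the case $z_k=a$ we have $w_{k+1}=p_k\Theta(a)w_k=w_ka\Theta(p_k)$, so the suffix $w_k$ of the leading $w_{k+1}$ of $r_{k+1,a}$ is followed by $a=z_k$. Thus $w_kz_k$ reoccurs and propagates to a second, overlapping occurrence of $w_{k+1}$, which pins down $r_{k+1,a}=p_k\Theta(a)w_ka\Theta(p_k)$. Applying the antimorphism $\Theta$ and using $\Theta(w_k)=w_k$ shows directly that this word is fixed by $\Theta$, hence a $\Theta$-palindrome.

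In the case $z_k\notin\{a,\Theta(a)\}$ the suffix $w_k$ of the leading $w_{k+1}$ is followed by $a\neq z_k$, so it opens a copy of $r_{k,a}$, which ends in $\Theta(a)w_k$; let $b$ be the letter following this copy. If $b=z_k$, then $w_kz_k$ reoccurs and completes a fresh $w_{k+1}$, giving $r_{k+1,a}=p_k\Theta(z_k)\,r_{k,a}\,z_k\Theta(p_k)$, and since $\Theta(r_{k,a})=r_{k,a}$ this word is a $\Theta$-palindrome. If instead $b\neq z_k$, then from $\Theta(a)w_kb\in\L$ and closure under $\Theta$ we obtain $\Theta(b)w_ka\in\L$, while the prefix $p_k\Theta(z_k)w_ka$ of $r_{k+1,a}$ already yields $\Theta(z_k)w_ka\in\L$; as $\Theta(b)\neq\Theta(z_k)$ this makes $w_ka$ left special, contradicting that $w_kz_k$ (with $a\neq z_k$) is the unique left special factor of length $|w_k|+1$. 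Hence $b=z_k$ and the palindromic form holds in all subcases.

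The step I expect to be the main obstacle is the propagation argument: justifying that a reoccurrence of $w_kz_k$ forces a full reoccurrence of $w_{k+1}$, and verifying that the words written above are genuine complete return words, i.e.\ that $w_{k+1}$ occurs in them only as a prefix and as a suffix. Both hinge on the $\Theta$-episturmian "at most one left special factor of each length" property together with the absence of bispecial factors strictly between $w_k$ and $w_{k+1}$. The left special contradiction in the second case is precisely where the nonexistence of $r_{k,\Theta(a)}$ (equivalently $aw_k,w_k\Theta(a)\notin\L$) is indispensable, and it is also what excludes $z_k=\Theta(a)$ at the outset.
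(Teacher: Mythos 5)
Your argument is correct and essentially reproduces the paper's own proof, recast as a direct case analysis rather than a contradiction: both rest on writing $r_{k+1,a}$ as $p_k\Theta(z_k)r_{k,a}b\cdots$, forcing $b=z_k$ via the uniqueness of the left special factor $w_kz_k$ of length $|w_k|+1$ (with closure under $\Theta$ turning $\Theta(a)w_kb$ into $\Theta(b)w_ka$), and yielding the same $\Theta$-palindromic forms $p_k\Theta(a)w_ka\Theta(p_k)$ and $p_k\Theta(z_k)r_{k,a}z_k\Theta(p_k)$. One caveat: your preliminary deductions from part~(\ref{crw2b}) (that $aw_k,\,w_k\Theta(a)\notin\L$ and that $z_k\neq\Theta(a)$) are superfluous rather than indispensable, since the left-special contradiction needs only the $\Theta$-episturmian uniqueness of $w_kz_k$, which is all the paper invokes.
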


\begin{proof}
Suppose that $r_{k+1,a}$ exists and is not a $\Theta$-palindrome.
Then $r_{k+1,a}$ begins by $p_k\Theta(z_k)r_{k,a}b$ where $z_k \neq b \in \A$.
As $\Theta(a)w_kb \in \L$ so is $\Theta(b)w_ka \in \L$.
Together with $\Theta(z_k)w_ka \in \L$ we see that $w_ka$ is left special, i.e., $z_k = a$.
This is impossible as we would have $r_{k+1,a} = p_k\Theta(a)w_ka\Theta(p_k)$ -- a $\Theta$-palindrome.
\end{proof}

\begin{coro}
Let $\u$ be a $\Theta$-episturmian word.
Let $l_k = |w_k|$.
If there exists an index $k_0$ such that $\PC{l_{k_0}} + \PC{l_{k_0}+1} = \Delta \FC{l_{k_0}} + 2$,
then for all $n \geq l_{k_0}$
$$\PC{n} + \PC{n+1} = \Delta \FC{n} + 2.$$
\end{coro}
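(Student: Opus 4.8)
The plan is to combine the loop characterization of equality in \Cref{pseudorovnost} with the propagation of palindromicity of complete return words supplied by \Cref{cor:zustavaji_palindromy}. First I would observe that for a $\Theta$-episturmian word the super reduced Rauzy graph has a single vertex at every length $n$ below the period: by definition there is at most one left special factor $L_n$ of each length, and since $\L$ is closed under $\Theta$ the only possible right special factor is $R_n = \Theta(L_n)$, so the special factors of length $n$ form the single unordered pair $\{L_n,\Theta(L_n)\}$. Consequently the tree condition (1) of \Cref{pseudorovnost} is automatic, and \eqref{eq:pseudorovnost} holds at level $n$ if and only if condition (2) holds, i.e.\ every $n$-simple path is a $\Theta$-palindrome. (If $\u$ is periodic the graph is eventually empty and the equality for large $n$ is given directly by \Cref{ref_req_explain}; this case I would dispatch separately together with the finite bispecial structure.)

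Next I would translate condition (2) at a bispecial length. Since $w_k$ and $\Theta(w_k)$ are both left special of the same length, $w_k$ is a $\Theta$-palindrome, hence the unique special factor of length $l_k$; therefore the $n$-simple paths for $n=l_k$ are precisely the complete return words to $w_k$, and equality at $l_{k_0}$ is equivalent to every $r_{k_0,a}$ being a $\Theta$-palindrome. By \Cref{prop:struktura_dle_ka}, case (\ref{crw2a}), for $k\le k_a$ one has $r_{k,a}=w_kaw_k$, which fails to be a $\Theta$-palindrome whenever $a\neq\Theta(a)$; thus the hypothesis forces $k_0>k_a$ for every non-$\Theta$-palindromic letter $a$ (these indices are finite by \Cref{ka_exists}). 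An induction on $k\ge k_0$ using \Cref{cor:zustavaji_palindromy} then shows that every complete return word to $w_k$ is a $\Theta$-palindrome for all $k\ge k_0$: the base case is the hypothesis, the letters with $a=\Theta(a)$ are covered by case (\ref{crw1}), and for $a\neq\Theta(a)$ the corollary yields that $r_{k+1,a}$ is either nonexistent or again a $\Theta$-palindrome. This already establishes \eqref{eq:pseudorovnost} at every bispecial length $l_k\ge l_{k_0}$.

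It remains to treat the intermediate lengths $l_k<n<l_{k+1}$ with $k\ge k_0$. Here the two special factors are $R_n$, the prefix of length $n$ of the next bispecial $w_{k+1}$, and $L_n=\Theta(R_n)$, its suffix of length $n$, and every $n$-simple path runs from $R_n$ to $L_n$. Following each out-edge of $R_n$ until the length-$n$ window first returns to a special factor, one sees that these paths are controlled by the complete return words to the enclosing bispecial $w_{k+1}$. Since $k+1\ge k_0$, all such return words are $\Theta$-palindromes by the previous paragraph, and because $w_{k+1}$, $R_n$ and $L_n=\Theta(R_n)$ are compatibly $\Theta$-symmetric I would argue that this forces each $n$-simple path $e$ to satisfy $e=\Theta(e)$: if some $e$ had $e\neq\Theta(e)$, then $\Theta(e)$ would be a second distinct $n$-simple path from $R_n$ to $L_n$, which can be realized as two distinct returns inside one complete return word to $w_{k+1}$, contradicting its $\Theta$-palindromicity. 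Thus condition (2) holds at level $n$ as well, and together with the bispecial case this covers all $n\ge l_{k_0}$.

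The hard part will be exactly this last step: pinning down precisely how the intermediate $n$-simple paths correspond to the complete return words of the adjacent bispecial factor $w_{k+1}$, and verifying rigorously that $\Theta$-palindromicity of the return words is inherited by the truncated paths. This needs the nesting of left and right special factors as suffixes and prefixes of $w_{k+1}$, together with careful bookkeeping of where the non-principal branches of $R_n$ rejoin $L_n$. By contrast, the single-vertex reduction, the bispecial-level translation, and the inductive propagation of \Cref{cor:zustavaji_palindromy} are straightforward consequences of the results already established.
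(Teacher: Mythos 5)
Your proposal reproduces the paper's own proof essentially step for step: the observation that $\Gamma_n''$ has a single vertex for a $\Theta$-episturmian word, the reduction via \Cref{pseudorovnost} of the equality at level $n$ to all loops being $\theta$-invariant, the translation of the hypothesis at $l_{k_0}$ into $\Theta$-palindromicity of all complete return words to $w_{k_0}$, the propagation to all $k \geq k_0$ by \Cref{cor:zustavaji_palindromy} (your explicit remark that the hypothesis forces $k_0 > k_a$ is a welcome detail the paper leaves implicit), the treatment of intermediate lengths by embedding $n$-simple paths into the $\Theta$-palindromic complete return words of the adjacent bispecial factor, and the periodic case by \Cref{ref_req_explain}. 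The step you flag as the hard one is precisely the step the paper itself leaves as a one-line assertion (``every $n$-simple path is in fact a $\Theta$-palindromic factor of a $(l_k)$-simple path or $w_k$''), and your only slips are minor: your labels are swapped (prefixes of the bispecial factor are \emph{left} special, so $L_n$ is the prefix and $R_n = \Theta(L_n)$ the suffix), and the claim that every $n$-simple path runs from one special factor to the other overlooks the simple path(s) lying inside $w_{k+1}$ itself, which the paper's ``or $w_k$'' clause is there to cover.
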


\begin{proof}

First note that as $\u$ is $\Theta$-episturmian, $\Gamma_n''$ has at most one vertex for all $n$.

According to \Cref{pseudorovnost}, the graph $\Gamma_{l_{k_0}}''$ consists of one vertex
and $\PC{l_{k_0}} + \PC{l_{k_0}+1} - 1$ edges which are loops mapped onto themselves.
Therefore all complete return words to $w_{k_0}$ are $\Theta$-palindromes.
Thus, using \Cref{cor:zustavaji_palindromy}, for all $k>k_0$, the graph $\Gamma_{l_k}''$ consists
of one vertex with loops mapped onto themselves.
Thus using again \Cref{pseudorovnost}, we see that $\PC{l_k} + \PC{l_k+1} = \Delta \FC{l_k} + 2$.

We will prove that for all $k > k_0$, for all $n$ such that $l_{k-1} < n < l_k$, the graph $\Gamma_n''$
is of the same form as $\Gamma_{l_k}''$ (one vertex with loops mapped onto themselves)
and thus according to \Cref{pseudorovnost}
the equation $\PC{n} + \PC{n+1} = \Delta \FC{n} + 2$ holds.
It suffices to see that every $n$-simple path is in fact a $\Theta$-palindromic factor of a $(l_k)$-simple path or $w_k$.

It remains to see the case of finite value of $K$.
Then for all $n > K$, there is no special factor of length $n$.
According to \Cref{ref_req_explain}, we have $2 = \PC{n} + \PC{n+1} = \Delta \FC{n} + 2$.

\end{proof}

The following corollary stems from combining the previous claim with \Cref{pseudo_rovnost_rich}.
It gives a criterion to see whether a $\Theta$-episturmian word is $\Theta$-rich or not.

\begin{coro}
\label{criterion_t-epis}
$\Theta$-episturmian word is $\Theta$-rich if and only if $\PC{1} + \PC{2} = \Delta \FC{1} + 1$.
\end{coro}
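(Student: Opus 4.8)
The plan is to derive \Cref{criterion_t-epis} by combining \Cref{pseudo_rovnost_rich} with the propagation established in the preceding corollary, so that the single equality at $n=1$ controls all levels. Recall that a $\Theta$-episturmian word has at most one vertex in every $\Gamma_n''$, so by \Cref{pseudorovnost} the equality \eqref{eq:pseudorovnost} at a given level $n$ is equivalent to the statement that every loop of $\Gamma_n''$, i.e. every $n$-simple path returning to the unique special factor, is a $\Theta$-palindrome. The forward implication is then immediate: if $\u$ is $\Theta$-rich, \Cref{pseudo_rovnost_rich} gives the equality for all $n \ge 1$, in particular at $n=1$.

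For the converse I would assume the equality at $n=1$ and show that it forces the equality at every $n \ge 1$, after which \Cref{pseudo_rovnost_rich} concludes. First, \Cref{P_acka_alternuji} yields that occurrences of $a$ and $\Theta(a)$ alternate for each non-$\Theta$-palindromic letter $a$, and the equality at $n=1$ means precisely that every $1$-simple path forming a loop at the unique special vertex of $\Gamma_1''$ is a $\Theta$-palindrome. If $|w_1| = 1$ then $n=1$ is already the length of the first bispecial factor and the preceding corollary applies directly with $k_0=1$. Otherwise I would climb from the special letter $z_0$ to $w_1$: for $1 \le n \le |w_1|$ the unique left special factor of length $n$ is the prefix of $w_1$ of that length, so every $n$-simple path is built by extending a $1$-simple path through this nested special prefix, and closure under $\Theta$ lets me transport $\Theta$-palindromicity from the $1$-simple paths to the complete return words to $w_1$.

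Concretely, the aim of this middle step is to prove that the equality at $n=1$ forces $k_a = 0$ for every non-$\Theta$-palindromic letter $a$: were $k_a \ge 1$, then \Cref{prop:struktura_dle_ka} would produce the complete return word $w_1 a w_1$, which is not a $\Theta$-palindrome, contradicting the equality at the level $|w_1|$; and I must check that such a return word already produces a non-$\Theta$-palindromic loop at level $1$, so that the hypothesis excludes it. Granting this, all complete return words to $w_1$ are $\Theta$-palindromes, i.e. the equality holds at the bispecial length $|w_1|$. \Cref{cor:zustavaji_palindromy} then keeps complete return words $\Theta$-palindromic at larger bispecial lengths, and the preceding corollary (with $k_0=1$) propagates the equality to all $n \ge |w_1|$; the intermediate levels $1 \le n < |w_1|$ are covered by the same extension argument. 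Hence the equality holds for all $n \ge 1$ and $\u$ is $\Theta$-rich.

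The main obstacle is exactly this middle step: converting the purely local information at $n=1$, namely that every $1$-simple-path loop is a $\Theta$-palindrome, into control of the complete return words to $w_1$ and to the intermediate special factors. The delicate point is to rule out a surviving non-$\Theta$-palindromic return word of the form $w_k a w_k$ at a length between $1$ and $|w_1|$; this is where the structure theorem \Cref{prop:struktura_dle_ka}, the alternation of \Cref{P_acka_alternuji}, and the nesting of the left special factors of a $\Theta$-episturmian word must be combined carefully.
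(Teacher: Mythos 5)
Your high-level plan coincides with the paper's own proof, which is given in one sentence: combine \Cref{pseudo_rovnost_rich} (this gives your forward direction at once, and reduces the converse to proving the equality for all $n\geq 1$) with the unlabelled corollary immediately preceding \Cref{criterion_t-epis} (which propagates the equality upward from a bispecial length $l_{k_0}=|w_{k_0}|$). Two things you did are right and worth saying: you read the right-hand side as $\Delta \FC{1} + 2$, i.e.\ as the equality \eqref{eq:pseudorovnost} at $n=1$ (the ``$+1$'' in the printed statement must be a misprint, since by \Cref{pseudo_rovnost_rich} any $\Theta$-rich word closed under $\Theta$ satisfies $\PC{1}+\PC{2}=\Delta\FC{1}+2$, so with ``$+1$'' no $\Theta$-episturmian word could be $\Theta$-rich, contradicting \Cref{slavny_pripad}); and you correctly isolated the point the paper glosses over, namely that the hypothesis lives at $n=1$ while the propagation corollary needs it at the bispecial length $l_1=|w_1|$, which may exceed $1$ (it does even in \Cref{slavny_pripad}, where $w_1=aa'$).

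The problem is that your bridge from $n=1$ to $l_1$ is not a proof; it has two genuine gaps. First, the step you yourself label ``the main obstacle'' --- that a non-$\Theta$-palindromic complete return word to $w_1$ already forces a non-$\Theta$-palindromic loop at level $1$ --- is exactly the missing mathematical content, and it is never established. What is actually needed is a transfer of $\Theta$-palindromicity of simple paths across the non-bispecial levels: for $0<j<l_1$ the unique left special factor $v_j$ is not right special, hence has a unique right extension, so every occurrence of $v_1$ extends on the right to an occurrence of $w_1$, and dually every occurrence of $\Theta(v_1)$ extends on the left to $\Theta(w_1)=w_1$ (note $w_1=\Theta(w_1)$, since $w_1$ right special makes $\Theta(w_1)$ left special, hence equal to $w_1$ by uniqueness). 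Consequently every $l_1$-simple path $q$ either has its two boundary occurrences of $w_1$ overlapping (in which case one checks directly that $q$ is forced to be a $\Theta$-palindrome) or has the form $q=v\,q'\,\Theta(v)$, where $v$ is the prefix of $w_1$ of length $l_1-1$ and $q'$ is a $1$-simple path; hence $q$ is a $\Theta$-palindrome if and only if $q'$ is. Since for a $\Theta$-episturmian word $\Gamma_n''$ has at most one vertex, \Cref{pseudorovnost} says the equality at level $n$ means precisely that all $n$-simple paths are $\Theta$-palindromes, so this core argument carries the hypothesis at $n=1$ to the equality at $l_1$ (and at the intermediate levels), after which the preceding corollary and \Cref{pseudo_rovnost_rich} finish. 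Second, even granting your unproven step, the inference ``$k_a=0$ for every non-$\Theta$-palindromic letter $a$, hence all complete return words to $w_1$ are $\Theta$-palindromes'' is a non sequitur: case (2b) of \Cref{prop:struktura_dle_ka} asserts only that at most one of $r_{1,a}$, $r_{1,\Theta(a)}$ exists, and says nothing about the $\Theta$-palindromicity of the one that exists. If the alphabet contains two non-$\Theta$-palindromic pairs $\{a,\Theta(a)\}$ and $\{b,\Theta(b)\}$, nothing you cite prevents $r_{1,a}$ from ending in $\Theta(b)w_1$, so that $\Theta(r_{1,a})=r_{1,b}\neq r_{1,a}$ and both are non-$\Theta$-palindromic while all the quantities $k_a$, $k_b$ vanish. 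So the reduction to the invariants $k_a$ cannot substitute for the level-by-level simple-path argument; the structure results \Cref{prop:struktura_dle_ka} and \Cref{cor:zustavaji_palindromy} are only needed, as in the paper, for the propagation \emph{above} $l_1$, not below it.
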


\subsection{Examples}
\label{sec:examples}

This section contains examples of infinite words with properties mentioned above.

\begin{priklad} \label{slavny_pripad}
Let $\pi$ denote a morphism $\{ 0,1 \}^* \mapsto \{a, a', c\}^*$:
$$
\pi:  \left \{ \begin{array}{l} 0 \mapsto aa' \\ 1 \mapsto aa'c \end{array} \right. .
$$
Denote $\Theta$ an involutory antimorphism defined as
$$
\Theta: a \mapsto a', \ a' \mapsto a,  \ c \mapsto c.
$$

Then the infinite word $\u = \pi(\vv)$, where $\vv$ is a Sturmian word, has the following properties:
\begin{enumerate}
  \item is $\Theta$-episturmian,
  \item satisfies the equality $\PC{n} + \PC{n+1} = \Delta \FC{n} + 2$ for $n \geq 1$,
  \item is $\Theta$-rich.
\end{enumerate}

As $\vv$ is Sturmian, it is clear that $\u$ is aperiodic.
Furthermore if we enumerate $\L[2]$ we see that $\Delta \C(1) = 1$.
These two facts imply that for $n \geq 1$ we have $\Delta \C(n) = 1$.
As the language $\mathcal{L}(\vv)$ is closed under reversal, the language $\L$ is closed under $\Theta$.

To prove the rest we can either apply directly \Cref{criterion_t-epis} or enumerate $\Theta$-palindromic complexity.
\end{priklad}

In the following examples the mapping $\Theta$ is considered the same as in \Cref{slavny_pripad}.

\begin{priklad}
Let $\pi$ be the following morphism
$$
\pi: 0 \mapsto a'cacc, 1 \mapsto a'cac.
$$

We set $\u = \pi(\vv)$ where $\vv$ is a Sturmian word over $\{0,1\}$.
Then $\u$ is $\Theta$-episturmian and not $\Theta$-rich.

The proof is similar as in \Cref{slavny_pripad}.
From $\vv$ we can deduce that the word $\u$ is aperiodic, uniformly recurrent and contains infinitely many $\Theta$-palindromes.
The last two properties imply that $\L$ is closed under $\Theta$.
As $\Delta \FC{2} = 1$ and there is one special factor of length $1$ -- the factor $c$ -- it is clear that $\u$ is $\Theta$-episturmian.

$\u$ is not $\Theta$-rich: factors $ca'c$ and $cac$ do not have a unioccurrent $\Theta$-palindromic suffix.

\end{priklad}

Next example is a $\Theta$-rich word which is not $\Theta$-episturmian.

\begin{priklad}
Set $\pi$ as follows
$$
\pi: 0 \mapsto aa', 1 \mapsto acca'.
$$
Let $\vv$ be a Sturmian word over $\{0,1\}$.
Then the word $\pi(\vv)$ is $\Theta$-rich and is not $\Theta$-episturmian.
The proof is left to the reader.

\end{priklad}

The following two examples concern periodic infinite words.
\begin{priklad}
The word $\left( caca' \right)^{\omega}$ is periodic but not $\Theta$-rich.
The factors $ca'c$ and $cac$ do not have a unioccurrent $\Theta$-palindromic suffix as before.
\end{priklad}

\begin{priklad}
The word $\left( ccaa' \right)^{\omega}$ is periodic and $\Theta$-rich.
It follows from the fact that we can find a unioccurrent $\Theta$-palindromic suffix for each prefix of this word.
\end{priklad}

\section{Final remarks}

A wider class of infinite words related to $\Theta$-palindromes has been studied in \cite{BuLuLuZa2}.
The class is constructed using the \textit{right $\Theta$-palindromic closure operator.}
To any word $w$ in $\A^*$ it associates the word $w^{\oplus}$ which is the shortest $\Theta$-palindromic word having $w$ as prefix.
It is clear that if $s$ denotes the longest $\Theta$-palindromic suffix of $w$ then if we denote $w=ps$ we have $w^{\oplus} = ps\Theta(p)$.

Let $u_0$ be a fixed finite word called \textit{seed}.
Let $\Psi: \A^* \to \A^*$ be a map defined as follows
\begin{eqnarray*}
\Psi(\varepsilon) & = & u_0, \\
\Psi(ux) & = & \left ( \Psi(u)x \right )^{\oplus} \,\, \text{for $u \in \A^*$ and $x \in \A$.}
\end{eqnarray*}

For a given infinite word $t$ we can define $\Psi(t)$ as
\[
\Psi(t) = \lim_{n \to +\infty} \Psi \left({\rm pref}_n (t) \right)
\]
where ${\rm pref}_n$ stands for the prefix of length $n$.
The word $t$ is called the \textit{directive word} of $\Psi(t)$ and is denoted $\Delta(\Psi(t))$.
The word $\Psi(t)$ is called \textit{$\Theta$-standard word with seed} if $u_0 \neq \varepsilon$ otherwise
it is called just $\Theta$-standard.
$\Theta$-episturmian words are included in $\Theta$-standard words with seed (exact condition and details are given in \cite{BuLuLuZa2}).

As the definition of $\Theta$-standard word is constructive,
a rough and direct condition to decide whether a $\Theta$-standard word is $\Theta$-rich is easy to derive.
If the word $\Psi \left({\rm pref}_n (t) \right)t_{n}$ is $\Theta$-rich for all $n \geq 0$ (with $t = t_0t_1t_2\ldots$),
then $\Psi(t)$ is $\Theta$-rich.

The following proposition connects the class of $\Theta$-standard words with seed with the class of $\Theta$-rich words.

\begin{prop}[Proposition 4.7, \cite{BuLuLuZa2}]
Let $\u$ be a $\Theta$-standard word with seed.
Then there exists an integer $M$ such that any prefix $p$ of $\u$ of length greater than $M$ has a unioccurrent $\Theta$-palindromic suffix.
\end{prop}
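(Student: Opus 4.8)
The plan is to read the $\Theta$-palindromic structure of $\u$ directly off the closure construction and then to choose $M$ so as to discard the finitely many prefixes spoiled by the seed. Write $t=t_0t_1\cdots$ for the directive word and set $q_n:=\Psi(\mathrm{pref}_n(t))$, so that $q_0=u_0$ and $q_{n+1}=(q_nt_n)^{\oplus}$. For every $n\ge 1$ the word $q_n$ is a $\Theta$-palindrome (being a closure), the $q_n$ form a strictly increasing chain of prefixes of $\u$ converging to $\u$, and, writing $q_nt_n=\pi_n s_n$ with $s_n$ the longest $\Theta$-palindromic suffix of $q_nt_n$, the closure takes the explicit mirror form $q_{n+1}=\pi_n s_n\Theta(\pi_n)$.

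First I would pin down all $\Theta$-palindromic prefixes of $\u$. Any prefix $p$ with $|p|>|q_n|$ begins with $q_nt_n$, since the letter of $\u$ in position $|q_n|$ is $t_n$; hence if such a $p$ were itself a $\Theta$-palindrome with $|p|<|q_{n+1}|$, it would be a $\Theta$-palindrome having $q_nt_n$ as a proper prefix yet shorter than $(q_nt_n)^{\oplus}=q_{n+1}$, contradicting minimality of the closure. Therefore the only $\Theta$-palindromic prefixes of $\u$ longer than $|u_0|$ are the $q_n$ themselves. Next I would fix $M$: since $\u$ uses finitely many letters and every pair $\{a,\Theta(a)\}$ occurring in $\u$ already occurs in a fixed prefix, I may choose $n_0\ge 1$ so large that $\gamma(q_{n_0})=\gamma(\u)$ and $|q_{n_0}|>|u_0|$, and set $M:=|q_{n_0}|$. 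For $n\ge n_0$ the set $\gamma$ no longer changes, so no prefix past $M$ ends in a letter whose $\Theta$-image is absent; a non-empty $\Theta$-palindromic suffix therefore always exists, and it remains only to prove that it is unioccurrent.

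The heart of the argument is this last point for prefixes strictly between two consecutive $q_n$. Given $p$ with $|p|>M$, take the largest $n$ with $|q_n|\le|p|$; then $n\ge n_0$ and $|q_n|\le|p|<|q_{n+1}|$. If $p=q_n$ then $p$ is a $\Theta$-palindrome occurring once as its own suffix. Otherwise, using $q_{n+1}=\pi_n s_n\Theta(\pi_n)$ and writing $\pi_n=\alpha\beta$, the prefix $p$ has the form $p=\alpha\beta s_n\Theta(\beta)$ for a proper suffix $\beta$ of $\pi_n$ (possibly empty), so its non-empty suffix $\beta s_n\Theta(\beta)$ is $\Theta$-palindromic. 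I would then show that the longest $\Theta$-palindromic suffix $\ell(p)$ of $p$ is unioccurrent by contradiction: a second occurrence of $\ell(p)$ inside $p$ would reproduce the central block $s_n$ at another position and hence either yield a $\Theta$-palindromic suffix of $q_nt_n$ strictly longer than $s_n$ — impossible by the choice of $s_n$ — or produce a $\Theta$-palindromic prefix of $\u$ of length strictly between $|q_n|$ and $|q_{n+1}|$, which the preceding paragraph forbids. With unioccurrence established for all $|p|>M$, the conclusion follows; via \Cref{ups} this is precisely the information needed to connect such words to $\Theta$-richness.

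The main obstacle I anticipate is exactly this unioccurrence step for the intermediate prefixes, carried out \emph{without} assuming $\u$ to be $\Theta$-rich: one cannot invoke the complete-return-word characterization and must instead track $\ell(p)$ explicitly through the reflection $q_{n+1}=\pi_n s_n\Theta(\pi_n)$ and extract the contradiction from the minimality of $s_n$. The delicate bookkeeping is the seed: $u_0$ can genuinely destroy unioccurrence for finitely many short prefixes, and verifying that the chosen threshold $n_0$ (hence $M$) removes every such exception — and that no extra $\Theta$-palindromic prefix hides in the seed region above $M$ — is the part that requires the structural analysis of $\Theta$-standard words with seed from \cite{BuLuLuZa2}.
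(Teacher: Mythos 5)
First, a point of comparison: the paper does not prove this statement at all --- it is imported verbatim as Proposition~4.7 of \cite{BuLuLuZa2}, with the remark that ``the integer $M$ is explained in detail in the paper.'' So your attempt must stand on its own, and it does not: the choice of $M$ is based on a false implication. You pick $n_0$ with $\gamma(q_{n_0})=\gamma(\u)$, set $M=|q_{n_0}|$, and argue that since no prefix past $M$ ``ends in a letter whose $\Theta$-image is absent,'' a non-empty $\Theta$-palindromic suffix ``therefore always exists.'' This reverses the paper's one-way observation (if $\gamma(w)\neq\gamma(wx)$ then $wx$ has no $\Theta$-palindromic suffix); the converse is false. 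Concretely, take $\A=\{a,a',c\}$, $\Theta\colon a\mapsto a',\ a'\mapsto a,\ c\mapsto c$, seed $u_0=a'cc$, and directive word beginning $aa\cdots$. Then $q_1=(a'cca)^{\oplus}=a'cca$, since $a'cca$ is already a $\Theta$-palindrome; we have $\gamma(q_1)=\bigl\{\{a,a'\}\bigr\}=\gamma(\u)$ and $|q_1|=4>|u_0|$, so your recipe gives $M=4$. Yet the prefix $q_1t_1=a'ccaa$ of length $5>M$ has \emph{no} non-empty $\Theta$-palindromic suffix whatsoever (its suffixes $a$, $aa$, $caa$, $ccaa$, $a'ccaa$ are all non-$\Theta$-palindromic); here $s_1=\varepsilon$ and $q_2=a'ccaa\,\Theta(a'ccaa)$. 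This is exactly the case $\beta=\varepsilon$, $s_n=\varepsilon$ of your decomposition $p=\alpha\beta s_n\Theta(\beta)$, which you silently excluded by calling the suffix $\beta s_n\Theta(\beta)$ non-empty. So your threshold provably fails, and any correct $M$ must be chosen by a different, structural criterion.

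The second gap is the unioccurrence step itself. Your argument is an unproved dichotomy: a second occurrence of the longest $\Theta$-palindromic suffix $\ell(p)$ inside $p$ is claimed to yield either a $\Theta$-palindromic suffix of $q_nt_n$ longer than $s_n$, or a $\Theta$-palindromic prefix of $\u$ of length strictly between $|q_n|$ and $|q_{n+1}|$. Nothing forces a second occurrence of $\ell(p)$ to be aligned with a prefix of $\u$ or with a suffix of $q_nt_n$; in particular, occurrences sitting inside the seed region fit neither horn, and you give no argument excluding them. You in fact concede this by deferring the ``delicate bookkeeping'' to the structural analysis of \cite{BuLuLuZa2} --- but that analysis (which is also where the genuine threshold $M$ is defined, in terms of the seed rather than of $\gamma$) is precisely the content of the proposition, not a peripheral detail. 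As it stands, the proposal establishes only the correct and useful preliminary fact that the $\Theta$-palindromic prefixes of $\u$ longer than $|u_0|$ are exactly the words $q_n$, $n\geq 1$; the rest is not a proof.
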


The integer $M$ is explained in detail in the paper.
It is clear that if $M = 1$, then $\u$ is $\Theta$-rich.
However $M = 1$ implies some restrictions on $\Theta$ and a criterion whether a $\Theta$-standard word with seed is $\Theta$-rich
remains an open question.

\section{Acknowledgements}
I would like to thank Edita Pelantová and L\!'ubomíra Balková for their careful reviewing and fruitful suggestions.
I acknowledge financial support by the Czech Science Foundation grant no. 201/09/0584,
by the grant no. MSM6840770039 and LC06002 of the Ministry of Education, Youth, and Sports of the Czech Republic and
by the grant no. SGS10/085OHK4/1T/14 of the Grant Agency of the Czech Technical University in Prague.

\bibliographystyle{amsplain}
\bibliography{pseudopal}

\end{document}